\newtheorem{thm}{Theorem}[section]
\newtheorem{prop}[thm]{Proposition}
\newtheorem{Def}[thm]{Definition}
\newtheorem{lemma}[thm]{Lemma}
\newtheorem{remark}[thm]{Remark}
\numberwithin{equation}{section}
\newcommand{\n}{||}
\newcommand{\e}{\mathcal{E}}
\newcommand{\hh}{H^2\cap H^1_0}
\newcommand{\un}{\tilde{u}}
\newcommand{\um}{\bar{u}}
\newcommand{\fot}{G_{u,v}}
\newcommand{\bw}{\tilde{B}}
\begin{document}

\begin{frontmatter}
\title{\textbf{Strong attractors for weakly damped quintic wave equation in bounded domains}}
\author{ Senlin Yan*, Zhijun Tang, Chengkui Zhong }

\address{Department of Mathematics, Nanjing University, Nanjing, 210093, PR China}

\cortext[]{Corresponding author.\\
	E-mails: dg20210019@smail.nju.edu.cn(S. Yan), tzj960629@163.com(Z. Tang), ckzhong@nju.edu.cn(C. Zhong), }

\begin{abstract}
	In this paper, we study the longtime dynamics for the weakly damped wave equation with quintic non-linearity in a bounded smooth domain of $\mathbb{R}^3.$ Based on the Strichartz estimates for the case of bounded domains, we establish the existence of a strong global attractor in the phase space $H^2(\Omega)\cap H^1_0(\Omega)\times H^1_0(\Omega)$. Moreover, the finite fractal dimension of the attractor is also shown with the help of the quasi-stable estimation.
\end{abstract}

\begin{keyword}
	Strong attractor; Wave equation; Quintic non-linearity; Strichartz estimates; Fractal dimension;
\end{keyword}
	
\end{frontmatter}

\section{Introduction}
	In this paper, we consider the following damped wave equation:
	\begin{equation}\label{eq1.1}
		\begin{cases}
			u_{tt}+\gamma u_t-\Delta u+f(u)=g,\quad (x,t)\in\Omega\times\mathbb{R}^+,\\
			u|_{\partial\Omega}=0,\quad t\in\mathbb{R}^+,\\
			u|_{t=0}=u_0,\ u_t|_{t=0}=u_1,\quad x\in\Omega.
		\end{cases}
	\end{equation}
	Here, $\Omega\subset\mathbb{R}^3$ is a bounded domain with smooth boundary, $\gamma>0$ is a fixed constant, $g\in L^2(\Omega)$ is independent of time, and the non-linearity $f\in C^2(\mathbb{R})$ with $f(0)=0$ and satisfies the following growth and dissipation conditions:
	\begin{align}
		\label{1.2}&|f''(s)|\leq C(1+|u|^3),\quad s\in\mathbb{R},\\
		\label{1.3}&\liminf_{|s|\rightarrow\infty}\frac{f(s)}{s}>-\lambda_1,\\
		\label{1.4}&f'(s)\geq -K,
	\end{align}
    where the constants $C,K\geq0$ and $\lambda_1>0$ is the first eigenvalue of $-\Delta$ on $L^2(\Omega)$ with Dirichlet boundary conditions. Additionally, in order to deal with the quintic case, we assume that $f$ satisfies the following extra conditions:
    \begin{align}
    	\label{1.5}&F(s)\geq-C+\sigma|s|^6,\quad\sigma>0,\\
    	\label{1.6}&f(s)s-4F(s)\geq-C,\\
    	\label{1.7}&f'(s)\geq-K+\delta|s|^4,\quad\delta>0,
    \end{align}
    where $C>0$ and $F(s)=\int_{0}^{s}f(\tau)d\tau$.\\
    \indent Weakly damped semilinear wave equations model various oscillatory processes in many areas of modern mathematical physics including electrodynamics, quantum mechanics, non-linear elasticity, see e.g. \cite{Arrieta,Babin,Ball,Vishik,Temam}. It is believed that the long-time dynamics of problem (\ref{eq1.1}) depends strongly on the growth rate of the non-linerity $f$. To the best of our knowledge, in a bounded smooth domain of $\mathbb{R}^3$, one can verify the global well-posedness of problem (\ref{eq1.1}) only in the case of sub-cubic or cubic non-linearities. Therefore, for a long time, the cubic growth rate of non-linearity $f$ has been considered as a critical one for the case of 3-D bounded domain and the existence of a compact global attractor in the natural energy space $H^1_0(\Omega)\times L^2(\Omega)$ has been known only for the sub-cubic or cubic case, see \cite{Arrieta,Babin,Ghidaglia,Hale1} and references therein. \\
    \indent For the case of sup-cubic non-linearity, since the uniqueness of weak energy solutions is still an open problem, one has to consider weak trajectory attractors or to consider the solution semigroup as multivalued maps, see \cite{Carvalho,Zelik3} and references therein. However, with the help of suitable versions of Strichartz estimates and Morawetz-Pohozhaev identity in the case of bounded domains, one can obtain the global well-posedness of so-called Shatah-Struwe solutions (Definition \ref{SS}), see \cite{Blair,Burq1,Burq2}. Working on the Shatah-Struwe solution semigroup arising from problem (\ref{eq1.1}), Kalantarov et al. \cite{Zelik1} proved the existence and regularities of the compact global attractor in the case of sub-quintic and quintic growth rates of non-linearitiy $f$. Then, Liu et al. \cite{Liu5,Liu6,Liu4} established the translational regular solution and studied the long-time behaviour of problem (\ref{eq1.1}) with lower regular forcing ($g\in H^{-1}$) and super-cubic non-linearity in both bounded and unbounded domains in $\mathbb{R}^3$. Recently, Savostianov and Zelik \cite{Savostianov1} studied the damped quintic wave equations with measure-driven and non-autonomous external forces in the 3-D perodic boundary conditions case. They proved the exitence of uniform attractors in a weak or strong topology in the energy phase space and their additional regularities. After that, Mei et al. \cite{Mei} studied the infinite-energy solutions for weakly damped wave equations in the whole space $\mathbb{R}^3$, which can be considered as a continuation of \cite{Savostianov1}.\\
    \indent Concerning the strong solution, Babin and Vishik \cite{Babin2} proved the existence of the maximal $(\e_1,\e)$ (for the definitions of the  symbols see (\ref{2-0})) atractor for problem (\ref{eq1.1}) by requesting the initial data belongs to $\e_1$. Ladyzhenskaya \cite{Lady} established the $\e_1$ attractor for cubic non-linearity. In \cite{Meng}, the author verified the $\e_1$ attractor via a different approach. Resently, the authors in \cite{Liu1} have obtained the strong global attractor in the sup-cubic but sub-quintic non-linearity case and they also obtained the strong exponential attractor, see \cite{Liu3}.\\
    \indent In this paper, we are concerned with the strong solutions of problem (\ref{eq1.1}) in the quintic case. We note that in \cite{Zelik1}, the authors have proved that if the initial data is taken form more regular phase space, then the corresponding Shatah-Struwe solution is more regular too. Thus it is natural to consider the strong attractor of Shatah-Struwe solution semigroup when restricted on $\e_1$. The main difficulty is to prove the dissipativity of the corresponding dynamical system. In \cite{Liu1}, when considering the sub-quintic case, the authors concerned the point dissipative since it seemed hard to get uniform dissipativity. However, we will show that it is actually possible to obtain the uniform dissipativity both in the sub-quintic and in the quintic case. Then, we use the quasi-stable method established in \cite{Chueshov2015} to prove that the strong global attractor exists and it has a finite fractal dimension.\\
    \indent The rest of the paper is organized as folows. In Section 2, some preliminary things including Strichartz estimates and the quasi-stable method are disscussed. In Section 3, we give some difinitions and prove the well-posedness of strong solutions. Uniform dissipativity of the strong solution semigroup is discussed in Section 4 and the quasi-stability of the dynamical system is obtained in Section 5. Finally, the existence and finite dimensionality of the strong global attractor are proved in Section 6.

\section{Preliminaries}
	In this section, we recall some concepts and mathematical preliminaries that will be used later.\\
	\indent We begin with the following abbreviations:
	\[L^p=L^p(\Omega),\quad H^1_0=H^1_0(\Omega),\quad H^2=H^2(\Omega),\]
	with $p\geq1.$ The notation $(\cdot,\cdot)$ stands for the usual inner product in $L^2.$ $X\hookrightarrow Y$ denotes that the space $X$ continuously embeds into $Y$ and $X\hookrightarrow\hookrightarrow Y$ denotes that $X$ compactly embeds into $Y.$ Define the following phase spaces
	\begin{equation}\label{2-0}
		\e=H^1_0\times L^2,\quad \e_1=[\hh]\times H^1_0,
	\end{equation}
	equipped with the norms respectively
	\[\n\xi_u\n_\e^2=\n\nabla u\n_{L^2}^2+\n u_t\n_{L^2}^2,\quad \n\xi_u\n_{\e_1}^2=\n\Delta u\n_{L^2}^2+\n\nabla u_t\n_{L^2}^2,\]
	where the vector $\xi_u:=(u,u_t)$.
	
\subsection{Strichartz type estimates.}
	We first recall the Strichartz estimates for wave equations in bounded domains. (see \cite{Blair,Burq1,Burq2} for details).
	\begin{equation}\label{eq2.1}
		\begin{cases}
			u_{tt}+\gamma u_t-\Delta u=G(t),\quad (x,t)\in\Omega\times\mathbb{R}^+,\\
			u|_{\partial\Omega}=0,\quad t\in\mathbb{R}^+,\\
			\xi_u(0)=\xi_0,\quad x\in\Omega.
		\end{cases}
	\end{equation}
    \begin{lemma}\citep[Proposition 2.1]{Zelik1}\label{lemma2.1}
    	Let $\xi_0\in\e,\ G\in L^1(0,T;L^2)$ and $u(t)$ be a solution of equation (\ref{eq2.1}) such that $\xi_u\in C(0,T;\e)$. Then the following estimate holds:
    	\begin{equation}
    		\n\xi_u(t)\n_{\e}\leq C\left(\n\xi_0\n_{\e}e^{-\beta t}+\int_{0}^{t}e^{-\beta(t-s)}\n G(s)\n_{L^2}ds,\right)
    	\end{equation}
    where the positive constants $\beta,\ C$ depend on $\gamma$, but independ on $t,\ \xi_0,$ and $G$.
    \end{lemma}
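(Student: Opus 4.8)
The plan is to prove this decay estimate by the classical perturbed-energy method, reducing the bound on $\n\xi_u(t)\n_\e$ to a scalar Gronwall inequality. First I would introduce the natural energy
\[
	E(t)=\tfrac12\n u_t\n_{L^2}^2+\tfrac12\n\nabla u\n_{L^2}^2,
\]
which is comparable to $\tfrac12\n\xi_u(t)\n_\e^2$, and test the equation in (\ref{eq2.1}) with $u_t$ to obtain the dissipation identity $\frac{d}{dt}E=-\gamma\n u_t\n_{L^2}^2+(G,u_t)$. Since the damping controls only $u_t$, this alone gives no decay, so I would add the standard cross term and work with
\[
	\Phi(t)=E(t)+\varepsilon(u,u_t)
\]
for a small parameter $\varepsilon>0$; by the Poincar\'e inequality $\n u\n_{L^2}^2\le\lambda_1^{-1}\n\nabla u\n_{L^2}^2$ and Young's inequality, $\Phi$ is equivalent to $E$ once $\varepsilon$ is small enough, say $c_1E\le\Phi\le c_2E$.

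Next I would differentiate $\Phi$. Using $\frac{d}{dt}(u,u_t)=\n u_t\n_{L^2}^2-\gamma(u,u_t)-\n\nabla u\n_{L^2}^2+(u,G)$ and collecting terms yields a quadratic part $-(\gamma-\varepsilon)\n u_t\n_{L^2}^2-\varepsilon\n\nabla u\n_{L^2}^2-\varepsilon\gamma(u,u_t)$ together with the forcing $(G,u_t)+\varepsilon(u,G)$. For $\varepsilon$ chosen small (depending only on $\gamma$ and $\lambda_1$) the quadratic part is bounded above by $-\alpha E$ for some $\alpha>0$, while Cauchy--Schwarz and $\Phi\sim E$ give $|(G,u_t)+\varepsilon(u,G)|\le C\n G\n_{L^2}\sqrt{\Phi}$. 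This produces $\frac{d}{dt}\Phi\le-\alpha\Phi+C\n G\n_{L^2}\sqrt{\Phi}$. Setting $\Psi=\sqrt{\Phi}$ turns this into the linear inequality $\frac{d}{dt}\Psi\le-\beta\Psi+C\n G\n_{L^2}$ with $\beta=\alpha/2$, and the integrating-factor form of Gronwall's lemma gives
\[
	\Psi(t)\le\Psi(0)e^{-\beta t}+C\int_0^t e^{-\beta(t-s)}\n G(s)\n_{L^2}\,ds.
\]
Since $\Psi$ is comparable to $\n\xi_u\n_\e$, this is exactly the asserted estimate, with $\beta,C$ depending only on $\gamma$ and the fixed $\lambda_1$.

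The genuine difficulty is not the algebra but the low regularity of the solution: the hypotheses only give $\xi_u\in C(0,T;\e)$, so the pairing $(u_{tt},u_t)$ and the integrations by parts above are a priori merely formal. I would make them rigorous by a density argument, approximating $\xi_0$ and $G$ by smooth data, deriving the identities for the corresponding regular solutions (for which every term is justified), and passing to the limit in the resulting integral inequality via the continuous dependence of solutions on the data; equivalently, one can invoke that the operator in (\ref{eq2.1}) generates a $C_0$-semigroup on $\e$ and combine the homogeneous decay with Duhamel's variation-of-constants formula. A minor technical point is the division by $\sqrt{\Phi}$ where $\Phi$ may vanish, which is handled by replacing $\Phi$ with $\Phi+\eta$, running the argument, and letting $\eta\to0^+$ in the final integral bound.
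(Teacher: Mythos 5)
The paper does not actually prove this lemma --- it is quoted verbatim from Proposition 2.1 of Kalantarov--Savostianov--Zelik \cite{Zelik1} --- but your perturbed-energy argument (the multiplier $u_t+\varepsilon u$, the equivalence $c_1E\le\Phi\le c_2E$ for small $\varepsilon$ via Poincar\'e, the substitution $\Psi=\sqrt{\Phi}$, and the integrating-factor Gronwall step) is precisely the standard proof used in that reference, and it is correct. Your two technical caveats --- justifying the formal identities for $C(0,T;\mathcal{E})$ solutions by density (or by the $C_0$-semigroup/Duhamel formulation) and replacing $\Phi$ by $\Phi+\eta$ before dividing by $\sqrt{\Phi}$ --- are exactly the right fixes, so there is nothing to add.
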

	\begin{lemma}\citep[Proposition 2.2]{Zelik1}\label{lemma2.2}
		Let the assumptions of Lemma \ref{lemma2.1} hold. Then, $u\in L^4(0,T;L^{12})$ and the following estimate holds:
		\begin{equation}\label{Stri}
			\n u\n_{L^4(0,T;L^{12})}\leq C_T(\n\xi_0\n_\e+\n G\n_{L^1(0,T;L^{2})}),
		\end{equation}
	where the constant $C_T$ may depend on $T$, but is independent of $\xi_0$ and $G$.
	\end{lemma}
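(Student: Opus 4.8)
The plan is to reduce the damped problem (\ref{eq2.1}) to the free wave equation by moving the damping term to the right-hand side, and then to feed the resulting $L^1_tL^2_x$ source into the homogeneous Strichartz estimate for the Dirichlet Laplacian on $\Omega$ via Duhamel's formula. The analytic heart of the matter is the following homogeneous estimate, which I take as the main input: if $v$ solves $v_{tt}-\Delta v=0$ on $\Omega\times\mathbb{R}^+$ with $v|_{\partial\Omega}=0$ and $\xi_v(0)=\xi_0\in\e$, then $\n v\n_{L^4(0,T;L^{12})}\le C_T\n\xi_0\n_\e$. Writing $W(t)\xi_0$ for the free solution operator and $S(t)=\frac{\sin(t\sqrt{-\Delta})}{\sqrt{-\Delta}}$, this says $\n W(\cdot)\xi_0\n_{L^4(0,T;L^{12})}\le C_T\n\xi_0\n_\e$, and in particular $\n S(\cdot)h\n_{L^4(0,T;L^{12})}\le C_T\n h\n_{L^2}$ (the case $v_0=0,\ v_1=h$). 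Note that $(4,12)$ is the scaling-critical, strictly wave-admissible pair for $\e=H^1_0\times L^2$ data in dimension three, since $\frac14+\frac{3}{12}=\frac12=\frac32-1$ (scaling) and $\frac14+\frac{1}{12}=\frac13<\frac12$ (admissibility).

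Granting this, the first step is the reduction. I rewrite (\ref{eq2.1}) as the free equation with an enlarged source, $u_{tt}-\Delta u=G-\gamma u_t=:\tilde G$, keeping the Dirichlet condition and the data $\xi_0$. The point is that $\tilde G$ still lies in $L^1(0,T;L^2)$: Lemma \ref{lemma2.1} applied to (\ref{eq2.1}) gives $\sup_{t\in[0,T]}\n\xi_u(t)\n_\e\le C(\n\xi_0\n_\e+\n G\n_{L^1(0,T;L^2)})$, whence $\n u_t\n_{L^\infty(0,T;L^2)}$ is controlled and $\n\gamma u_t\n_{L^1(0,T;L^2)}\le\gamma T\n u_t\n_{L^\infty(0,T;L^2)}\le C_T(\n\xi_0\n_\e+\n G\n_{L^1(0,T;L^2)})$. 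Hence $\n\tilde G\n_{L^1(0,T;L^2)}\le C_T(\n\xi_0\n_\e+\n G\n_{L^1(0,T;L^2)})$.

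The second step is to estimate the Duhamel representation $u(t)=W(t)\xi_0+\int_0^t S(t-s)\tilde G(s)\,ds$. The homogeneous part is bounded by $C_T\n\xi_0\n_\e$ directly. For the forced part I avoid any $TT^*$ machinery and use only Minkowski's integral inequality together with the $S(\cdot)$-estimate above: at fixed $s$ the integrand is a free evolution of the $L^2$ datum $\tilde G(s)$, so
\[
\Big\|\int_0^{\,\cdot} S(\cdot-s)\tilde G(s)\,ds\Big\|_{L^4(0,T;L^{12})}\le\int_0^T\n S(\cdot-s)\tilde G(s)\n_{L^4(s,T;L^{12})}\,ds\le C_T\int_0^T\n\tilde G(s)\n_{L^2}\,ds.
\]
Combining with the bound on $\n\tilde G\n_{L^1(0,T;L^2)}$ from Step~1 yields exactly (\ref{Stri}); this $L^1_tL^2_x$ endpoint is the ``easy'' inhomogeneous case, so no Christ--Kiselev lemma is needed.

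The main obstacle is concentrated entirely in the homogeneous estimate quoted at the outset. On $\mathbb{R}^3$ or on tori the $(4,12)$ estimate is classical (stationary phase and the dispersive decay of the free propagator), but on a bounded domain the boundary produces reflected, gliding and grazing rays together with focusing of the wavefront, which obstruct the usual parametrix and dispersive bounds. Establishing the estimate there requires the microlocal analysis and square-function/spectral-cluster techniques of Blair--Smith--Sogge and Burq--Lebeau--Planchon (see \cite{Blair,Burq1,Burq2}), where one first proves local-in-time Strichartz inequalities with a controlled loss of derivatives and then, exploiting the subcriticality of $(4,12)$ in the admissibility range, upgrades them to the loss-free critical estimate that underlies (\ref{Stri}). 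I would treat that analysis as a black box and keep the self-contained part of the argument confined to the reduction and Duhamel steps above.
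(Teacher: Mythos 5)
Your argument is correct and is essentially the proof of the cited source: the paper itself states this lemma without proof (quoting \cite{Zelik1}), and the proof of Proposition 2.2 there proceeds exactly as you do, treating $-\gamma u_t$ as part of the source term controlled via the energy estimate of Lemma \ref{lemma2.1}, and then applying Duhamel's formula with the homogeneous $L^4L^{12}$ Strichartz estimate for the Dirichlet wave equation on bounded domains from \cite{Blair,Burq1,Burq2}, the inhomogeneous $L^1_tL^2_x$ case following from Minkowski's inequality without Christ--Kiselev. Your admissibility check for the pair $(4,12)$ and the identification of the boundary-domain homogeneous estimate as the only genuinely hard input are both accurate.
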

    \begin{remark}
    	Combining with the above two Lemmas, we get 
    	\begin{equation}\label{esti2.4}
    		\n\xi_u(t)\n_{\e}+\n u\n_{L^4(max\{0,t-1\},t;L^{12})}\leq C\left(\n\xi_0\n_{\e}e^{-\beta t}+\int_{0}^{t}e^{-\beta(t-s)}\n G(s)\n_{L^2}ds,\right),
    	\end{equation}
    where the constants $\beta,\ C$ are independent of $u$ and $t\geq0$.
    \end{remark}
	The next basic fact is a direct result of Corollary 2.4 in \cite{Zelik1}.
	\begin{lemma}\label{coro2.4}
		Let $K\subset\e$ be a compact set. Then, for any $\varepsilon>0$ there is $T=T(\varepsilon,K)>0$ such that
		\[\n u\n_{L^4(0,T;L^{12})}\leq\varepsilon\]
		for all solutions $u$ of equation (\ref{eq2.1}) with initial data $\xi_u(0)\in K.$
	\end{lemma}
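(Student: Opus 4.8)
The plan is to read the conclusion as an equicontinuity statement on short time intervals and to extract the required uniformity from the compactness of $K$ together with the Strichartz estimate (\ref{Stri}). Note first that, since $\n u\n_{L^4(0,T;L^{12})}$ is monotone increasing in $T$ and $K$ may contain elements of large norm, the bound $\n u\n_{L^4(0,T;L^{12})}\leq\varepsilon$ can only be achieved for \emph{small} $T$; thus the assertion is that the Strichartz norm vanishes as $T\to0^+$, uniformly over $\xi_u(0)\in K$. Fix once and for all a reference time $T_0>0$. By Lemma \ref{lemma2.2}, every solution $u$ of (\ref{eq2.1}) issued from $\xi_u(0)=\xi_0$ belongs to $L^4(0,T_0;L^{12})$. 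Since (\ref{eq2.1}) is linear in $(\xi_0,G)$, the difference $u^1-u^2$ of two solutions carrying the same forcing $G$ solves the homogeneous version of (\ref{eq2.1}) with data $\xi_0^1-\xi_0^2$; applying (\ref{Stri}) to this difference (with zero right-hand side) gives $\n u^1-u^2\n_{L^4(0,T_0;L^{12})}\leq C_{T_0}\n\xi_0^1-\xi_0^2\n_\e$. Hence the solution map $S:\xi_0\mapsto u|_{[0,T_0]}$ is affine Lipschitz from $\e$ into $L^4(0,T_0;L^{12})$, the forcing entering only through a fixed particular solution, so it does not obstruct what follows.

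Because $K$ is compact in $\e$ and $S$ is continuous, the image $\mathcal U:=S(K)$ is a compact subset of $L^4(0,T_0;L^{12})$. The key analytic fact is then an equi-absolute-continuity property of this compact family: I claim $\sup_{v\in\mathcal U}\n v\n_{L^4(0,T;L^{12})}\to0$ as $T\to0^+$. For a single $v\in L^4(0,T_0;L^{12})$ this is immediate from dominated convergence, since $\n v\n_{L^4(0,T;L^{12})}^4=\int_0^T\n v(s)\n_{L^{12}}^4\,ds$ with integrand in $L^1(0,T_0)$. To upgrade this to uniformity, given $\varepsilon>0$ I would cover the compact set $\mathcal U$ by finitely many balls of radius $\varepsilon/2$ centered at $v_1,\dots,v_N\in\mathcal U$, choose $T_i>0$ with $\n v_i\n_{L^4(0,T_i;L^{12})}\leq\varepsilon/2$, and set $T:=\min_{1\le i\le N}T_i$. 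Any $v\in\mathcal U$ lies within $\varepsilon/2$ of some $v_i$ in the $L^4(0,T_0;L^{12})$-norm, hence also in the smaller $L^4(0,T;L^{12})$-norm since $T\le T_0$, so $\n v\n_{L^4(0,T;L^{12})}\leq\n v_i\n_{L^4(0,T;L^{12})}+\varepsilon/2\leq\n v_i\n_{L^4(0,T_i;L^{12})}+\varepsilon/2\leq\varepsilon$, using $T\le T_i$ in the middle step. This produces the desired $T=T(\varepsilon,K)$.

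The main obstacle I anticipate is precisely the passage from the trivial single-solution smallness to uniformity over $K$: absolute continuity of the norm holds for each fixed solution, but without additional structure the infimum of admissible $T$ over all $\xi_0\in K$ could collapse to $0$. The remedy is the compactness of the trajectory family $\mathcal U$ in $L^4(0,T_0;L^{12})$, which is exactly where the linearity of (\ref{eq2.1}) and the Strichartz estimate (\ref{Stri}) are essential: they convert compactness of $K$ in $\e$ into compactness of the associated Strichartz trajectories, after which the finite-cover argument closes the gap. A secondary point requiring care is the forcing term: one must check that $G$ enters the estimate only through a fixed (or, in the applications to the nonlinear problem where $G=g-f(u)$, a uniformly equi-integrable) particular solution, so that its short-time $L^4(0,T;L^{12})$-contribution also vanishes as $T\to0^+$ by the same dominated-convergence argument; this is guaranteed here by the affine dependence established in the first step.
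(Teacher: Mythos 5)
Your proof is correct and is essentially the same argument the paper relies on: the paper offers no proof of this lemma, citing it as a direct consequence of Corollary 2.4 in \cite{Zelik1}, whose underlying proof is exactly your scheme --- absolute continuity of the $L^4(0,T;L^{12})$-norm for a single trajectory, compactness of the trajectory family $S(K)$ obtained from the linear Strichartz estimate (\ref{Stri}), and a finite $\varepsilon/2$-net to make the vanishing as $T\to0^+$ uniform. Your added observation that the statement is only meaningful for a \emph{fixed} forcing (in the paper's applications $G=g$ is time-independent, entering affinely through one particular solution whose short-time Strichartz norm also vanishes) correctly addresses a point that the paper's loose formulation of the lemma, which quantifies over ``all solutions of equation (\ref{eq2.1})'' without constraining $G$, leaves implicit.
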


\subsection{Quasi-stable system}
	In this subsection, we recall the theory of quasi-stability method developed by I. Chueshov and I. Lasiecka\cite{Chueshov2008,Chueshov2015}. This method will help us verifying the asymptotic smoothness of the solution semigroup and the finite dimensionality of the corresponding global attractor.
	\begin{Def}\citep[Definition 3.4.1]{Chueshov2015}\label{defquasi}
		Let $(X,S(t))$ be a dynamical system in some Banach space $X$. This system is said to be quasi-stable on a set $B\subset X$ if there exist a time $t_*>0$, a Banach space $Z$, a globally Lipschitz mapping $K:B\rightarrow Z$ and a compact seminorm $n_Z(\cdot)$ on the space $Z$, such that
		\[\n S(t_*)y_1-S(t_*)y_2\n_X\leq q\n y_1-y_2\n_X+n_Z(Ky_1-Ky_2)\]
		for every $y_1,y_2\in B$ with $q\in[0,1)$. The space $Z$, the operator $K$, the seminorm $n_Z$ and the time moment $t_*$ may depend on $B$.
	\end{Def}
	\begin{prop}\citep[Proposition 3.4.3]{Chueshov2015}\label{prop1}
		Assume that a dynamical system $(X,S(t))$ is quasi-stable on every positivity invariant bounded set $B$ in $X$. Then $(X,S(t))$ is asymptotically smooth.
	\end{prop}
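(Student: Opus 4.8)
The statement is the abstract criterion that quasi-stability forces asymptotic smoothness, and the natural route is through the Kuratowski measure of noncompactness $\alpha(\cdot)$. Recall that $\alpha(A)$ is the infimum of all $d>0$ for which $A$ admits a finite cover by sets of $X$-diameter at most $d$, that $\alpha(A)=0$ iff $\overline{A}$ is compact, and that the phrase \emph{compact seminorm} means every bounded sequence in $Z$ has an $n_Z$-Cauchy subsequence, i.e. bounded subsets of $Z$ are totally bounded for the pseudometric $(z_1,z_2)\mapsto n_Z(z_1-z_2)$. The plan is to fix a bounded positively invariant set $B$, take $t_*,Z,K,n_Z$ and $q\in[0,1)$ from Definition \ref{defquasi}, and show that the single map $S(t_*)$ is an $\alpha$-contraction on $B$; everything else is iteration and a continuity patch.

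The core step, which I expect to be the main obstacle, is the estimate
\[
\alpha(S(t_*)W)\le q\,\alpha(W)\qquad\text{for every }W\subseteq B.
\]
To prove it I would fix $\varepsilon>0$. Since $K$ is globally Lipschitz and $B$ is bounded, $K(W)$ is bounded in $Z$, so by compactness of $n_Z$ the set $W$ admits a finite cover $U_1,\dots,U_p$ with $n_Z(Ku-Ku')<\varepsilon$ whenever $u,u'\in U_i$. By definition of $\alpha$ there is also a finite cover $V_1,\dots,V_m$ of $W$ with $\mathrm{diam}_X V_j\le\alpha(W)+\varepsilon$. Passing to the common refinement $\{U_i\cap V_j\}$ and applying the quasi-stability inequality to any two points $u,u'$ of one piece gives
\[
\|S(t_*)u-S(t_*)u'\|_X\le q\|u-u'\|_X+n_Z(Ku-Ku')\le q(\alpha(W)+\varepsilon)+\varepsilon.
\]
Hence $S(t_*)W$ is covered by finitely many sets of diameter at most $q\,\alpha(W)+(q+1)\varepsilon$, so $\alpha(S(t_*)W)\le q\,\alpha(W)+(q+1)\varepsilon$, and letting $\varepsilon\to0$ yields the claim. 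The whole argument hinges on exploiting compactness of $n_Z$ to absorb the term $n_Z(Ku-Ku')$ into an arbitrarily small \emph{finite} correction, so that only the genuinely contractive factor $q<1$ survives.

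Next I would iterate. Positive invariance gives $S(t_*)B\subseteq B$, and applying $S(nt_*)$ to both sides yields the nesting $S((n+1)t_*)B=S(nt_*)(S(t_*)B)\subseteq S(nt_*)B$, so $\{S(nt_*)B\}_{n\ge0}$ is a decreasing family. Writing $\beta_n:=\alpha(S(nt_*)B)$ and applying the $\alpha$-contraction estimate with $W=S(nt_*)B$ gives $\beta_{n+1}\le q\beta_n$, whence $\beta_n\le q^n\beta_0\to0$ since $\beta_0=\alpha(B)<\infty$. By the nesting, moreover, $\alpha\bigl(\bigcup_{k\ge N}S(kt_*)B\bigr)=\alpha(S(Nt_*)B)=\beta_N\to0$.

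Finally I would upgrade to continuous time and conclude. Given any $t_n\to\infty$ and $y_n\in B$, write $t_n=k_n t_*+r_n$ with $k_n\to\infty$ and $r_n\in[0,t_*)$, and pass to a subsequence with $r_n\to r_*$. The points $w_n:=S(k_n t_*)y_n$ lie, for $k_n\ge N$, in $S(Nt_*)B$ of measure $\beta_N$; since finitely many initial terms do not affect $\alpha$, the set $\{w_n\}$ has $\alpha(\{w_n\})\le\inf_N\beta_N=0$ and is therefore precompact, so $w_{n_j}\to w_*$ along a subsequence. Joint continuity of the flow then gives $S(t_{n_j})y_{n_j}=S(r_{n_j})w_{n_j}\to S(r_*)w_*$, proving that every such sequence is precompact. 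This asymptotic compactness is exactly the assertion that $\alpha(S(t)B)\to0$ as $t\to\infty$, equivalently that $\omega(B)=\bigcap_{s\ge0}\overline{\bigcup_{t\ge s}S(t)B}$ is nonempty, compact, contained in $\overline{B}$, and attracts $B$; that is, $(X,S(t))$ is asymptotically smooth. The discrete-to-continuous patch is routine compared with the covering argument of the second paragraph.
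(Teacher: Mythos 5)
The paper does not prove this proposition at all: it is imported verbatim from Chueshov's book (Proposition 3.4.3 of \cite{Chueshov2015}), so there is no in-paper argument to compare against. Your reconstruction is correct and is essentially the standard proof of that cited result: the covering argument showing $\alpha(S(t_*)W)\le q\,\alpha(W)$ for $W\subseteq B$ is exactly the right use of the compact seminorm (finitely many pieces on which $n_Z(Ku-Ku')<\varepsilon$, intersected with an almost-optimal $\alpha$-cover), and the iteration $\beta_{n+1}\le q\beta_n$ is sound because positive invariance keeps each $S(nt_*)B$ inside $B$. One small simplification worth noting: the continuous-time patch does not require joint continuity of $(t,x)\mapsto S(t)x$. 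For $t\ge Nt_*$ one has $S(t)B=S(Nt_*)\bigl(S(t-Nt_*)B\bigr)\subseteq S(Nt_*)B$ by positive invariance, so $\alpha\bigl(\bigcup_{t\ge Nt_*}S(t)B\bigr)\le\beta_N\to0$ directly; asymptotic compactness, and hence the existence of a compact attracting set $\omega(B)\subseteq\overline{B}$, then follows by the standard $\omega$-limit argument without extracting convergent subsequences of $S(r_{n_j})w_{n_j}$. Apart from that inessential reliance on joint continuity (and the unneeded equality $\alpha(\bigcup_{k\ge N}S(kt_*)B)=\alpha(S(Nt_*)B)$, where only $\le$ is used), the proof is complete.
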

	\begin{prop}\citep[Theorem 3.4.5]{Chueshov2015}\label{prop2}
		Assume that a dynamical system $(X,S(t))$ possesses a compact global attractor $\mathcal{A}$ and is quasi-stable on $\mathcal{A}$ at some time $t_*>0$. Then, $\mathcal{A}$ has a finite fractal dimension $dim_f^X(\mathcal{A})$ in $X$. Moreover, we have the estimate
		\begin{equation}\label{dim}
			dim_f^X(\mathcal{A})\leq\left[\ln\frac{2}{1+q}\right]^{-1}\cdot \ln m_Z\left(\frac{4L_K}{1-q}\right),
		\end{equation}
	where $L_K>0$ is the Lipschitz constant for $K$ and $m_Z(R)$ is the maximal number of elements $z_i$ in the ball $B_Z(0,R)=\{z\in Z:\n z\n_Z\leq R\}$ possessing the property $n_Z(z_i-z_j)>1$ when $i\neq j$.
	\end{prop}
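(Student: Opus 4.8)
The plan is to reduce the statement to a single recursive covering estimate for the discrete map $V:=S(t_*)$ and then read off the dimension bound by iteration. Because $\mathcal{A}$ is the global attractor it is strictly invariant, so $V\mathcal{A}=\mathcal{A}$, while the quasi-stability hypothesis (Definition \ref{defquasi}) supplies, for all $y_1,y_2\in\mathcal{A}$,
\begin{equation*}
\n Vy_1-Vy_2\n_X\leq q\n y_1-y_2\n_X+n_Z(Ky_1-Ky_2),\qquad q\in[0,1).
\end{equation*}
It is convenient to measure $\dim_f^X$ through covering numbers $\tilde N_r(\mathcal{A})$ by sets of diameter at most $r$, which yield the same fractal dimension as coverings by balls; this bookkeeping choice turns out to be essential for the sharp constants.

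The core step I would establish is the recursion
\begin{equation*}
\tilde N_{qr+\rho}(\mathcal{A})\leq m_Z\!\left(\frac{2L_K r}{\rho}\right)\tilde N_r(\mathcal{A}),\qquad \rho>0.
\end{equation*}
To prove it, fix a cover of $\mathcal{A}$ by $\tilde N_r(\mathcal{A})$ sets $U_i$ of diameter $\leq r$ and choose $a_i\in U_i\cap\mathcal{A}$. Since $K$ is Lipschitz with constant $L_K$, the set $\{Ky-Ka_i:y\in U_i\cap\mathcal{A}\}$ lies in the ball $B_Z(0,L_Kr)$. Compactness of the seminorm $n_Z$ makes this ball totally bounded for $n_Z$, so by the packing--covering comparison it is covered by at most $m_Z(2L_Kr/\rho)$ sets of $n_Z$-radius $\rho/2$. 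Splitting $U_i\cap\mathcal{A}$ according to which of these cells contains $Ky-Ka_i$, any two points $y,y'$ in a common cell satisfy $\n y-y'\n_X\leq r$ (same $U_i$) and $n_Z(Ky-Ky')\leq\rho$ (two $n_Z$-radii of $\rho/2$). The quasi-stability inequality then bounds the diameter of the $V$-image of each cell by $qr+\rho$, and the identity $\mathcal{A}=V\mathcal{A}=\bigcup_i V(U_i\cap\mathcal{A})$ gives the recursion.

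Choosing $\rho=\tfrac12(1-q)r$ collapses the scale by the fixed factor $\theta=(1+q)/2<1$ and turns the argument of $m_Z$ into $4L_K/(1-q)$, so that $\tilde N_{\theta r}(\mathcal{A})\leq m_Z(4L_K/(1-q))\,\tilde N_r(\mathcal{A})$. Iterating from an initial scale $r_0$ (finite because $\mathcal{A}$ is compact) yields $\tilde N_{\theta^k r_0}(\mathcal{A})\leq [m_Z(4L_K/(1-q))]^k\,\tilde N_{r_0}(\mathcal{A})$; substituting $\varepsilon=\theta^k r_0$ into $\dim_f^X(\mathcal{A})=\limsup_{\varepsilon\to0}\ln\tilde N_\varepsilon(\mathcal{A})/\ln(1/\varepsilon)$ and using monotonicity of $\tilde N_\varepsilon$ produces exactly (\ref{dim}).

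The \emph{main obstacle} is the recursive covering estimate, and two features of it require care. First, the finiteness of $m_Z(R)$ for every $R$ is precisely where compactness of $n_Z$ is indispensable: without it the cells in $Z$ could not be reduced to a finite number and the scheme would break down. Second, the sharp constants hinge on organizing the argument through diameter covers and on covering $B_Z(0,L_Kr)$ at $n_Z$-radius $\rho/2$ rather than $\rho$; this is what keeps the contraction on the $X$-component equal to $q$ (not $2q$), delivers the factor $4L_K/(1-q)$ instead of a weaker one, and lets the bound remain valid on the full range $q\in[0,1)$.
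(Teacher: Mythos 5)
The paper does not prove this proposition; it is quoted directly from [Chueshov 2015, Theorem 3.4.5] and used as a black box. Your argument is correct and is essentially the standard proof from that reference: the recursive covering estimate driven by the quasi-stability inequality, with the choice $\rho=\tfrac12(1-q)r$ and the packing-to-covering comparison for the compact seminorm $n_Z$, reproduces exactly the constants $2/(1+q)$ and $4L_K/(1-q)$ appearing in (\ref{dim}).
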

	
\subsection{Some abstract results}
    \begin{lemma}\cite{Savostianov}\label{Savostianov}
    	Suppose that a function $y(s)\in C([a,b))$ satisfies $y(a)=0,\ y(s)\geq0$ and 
    	\[y(s)\leq C_0y(s)^\sigma+\varepsilon\]
    	for some $\sigma>1,\ 0<C_0<\infty$ and $0<\varepsilon<\frac{1}{2}(\frac{1}{2C_0})^\frac{1}{\sigma-1}$. Then
    	\[y(s)\leq2\varepsilon,\quad \forall s\in[a,b).\]
    \end{lemma}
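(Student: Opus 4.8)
The plan is to convert the functional inequality into a \emph{barrier} (continuity) argument. First I would introduce the auxiliary function
\[
\phi(r)=C_0 r^{\sigma}-r+\varepsilon,\qquad r\ge 0,
\]
so that the hypothesis $y(s)\le C_0 y(s)^{\sigma}+\varepsilon$ reads simply as $\phi(y(s))\ge 0$ for every $s\in[a,b)$ (the standing assumption $y(s)\ge 0$ guarantees that $y(s)^{\sigma}$, and hence $\phi(y(s))$, is well defined). The initial level $r=0$ is safe, since $\phi(0)=\varepsilon>0$, and by assumption the starting value $y(a)=0$ lies in this positive region.

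The crucial computation is to check that the barrier level $r=2\varepsilon$ lies strictly inside the forbidden region, i.e. $\phi(2\varepsilon)<0$. Expanding gives $\phi(2\varepsilon)=C_0 2^{\sigma}\varepsilon^{\sigma}-\varepsilon=\varepsilon\bigl(C_0 2^{\sigma}\varepsilon^{\sigma-1}-1\bigr)$, so $\phi(2\varepsilon)<0$ is equivalent to $\varepsilon^{\sigma-1}<(2^{\sigma}C_0)^{-1}$, that is, $\varepsilon<(2^{\sigma}C_0)^{-1/(\sigma-1)}$. A short simplification shows that this threshold is precisely the one assumed: using $1+\tfrac{1}{\sigma-1}=\tfrac{\sigma}{\sigma-1}$,
\[
\tfrac12\Bigl(\tfrac{1}{2C_0}\Bigr)^{1/(\sigma-1)}=2^{-1-1/(\sigma-1)}C_0^{-1/(\sigma-1)}=2^{-\sigma/(\sigma-1)}C_0^{-1/(\sigma-1)}=(2^{\sigma}C_0)^{-1/(\sigma-1)}.
\]
Thus the prescribed bound on $\varepsilon$ is exactly calibrated so that $\phi(2\varepsilon)<0$.

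With these two facts the conclusion follows by continuity alone, without even locating the roots of $\phi$. Suppose, for contradiction, that $y(s^{*})>2\varepsilon$ for some $s^{*}\in[a,b)$. Since $y\in C([a,b))$ and $y(a)=0<2\varepsilon<y(s^{*})$, the intermediate value theorem produces $s_1\in(a,s^{*})$ with $y(s_1)=2\varepsilon$. Evaluating the hypothesis at $s_1$ then forces $\phi(2\varepsilon)=\phi(y(s_1))\ge 0$, contradicting $\phi(2\varepsilon)<0$. Hence no such $s^{*}$ exists, and $y(s)\le 2\varepsilon$ for all $s\in[a,b)$.

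I expect the only real subtlety to be the algebraic verification in the second paragraph, namely that the sharp threshold for $\varepsilon$ is exactly the point at which $\phi(2\varepsilon)$ changes sign; once this calibration is confirmed, the continuity/barrier step is immediate and robust. It is worth remarking that the same argument in fact yields the slightly stronger statement $y(s)\le r_1<2\varepsilon$, where $r_1$ is the smaller root of $\phi$, but the stated bound $y(s)\le 2\varepsilon$ is all that is needed downstream.
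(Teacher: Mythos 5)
Your proof is correct: the algebraic calibration $\tfrac12(\tfrac{1}{2C_0})^{1/(\sigma-1)}=(2^{\sigma}C_0)^{-1/(\sigma-1)}$ is verified accurately, so $\phi(2\varepsilon)<0$ while $\phi(y(a))=\phi(0)=\varepsilon>0$, and the intermediate value theorem then rules out $y$ ever reaching the level $2\varepsilon$. The paper itself gives no proof of this lemma (it is quoted from the cited reference \cite{Savostianov}), and your continuity/barrier argument is precisely the standard one used there.
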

	\begin{lemma}(Aubin-Dubinskii-Lions Lemma,\cite{Lions})\label{Aubin}
		Assume that $X\subset Y\subset Z$ is a triple of Banach spaces such that $X\hookrightarrow\hookrightarrow Y\hookrightarrow Z$.\\
		(i) Let $W_1=\{u\in L^p(a,b;X)|u_t\in L^q(a,b;Z)\}$ for some $1\leq p\leq\infty$ and $q\geq1$. Here $u_t$ denotes the derivative of $u$ in the distributional sense. Then, $W_1\hookrightarrow\hookrightarrow L^p(a,b;Y).$ If $q>1$, then $W_1\hookrightarrow\hookrightarrow C(a,b;Z).$\\
		(ii) Let $W_2=\{u\in L^\infty(a,b;X)|u_t\in L^r(a,b;Z)\}$ for some $r>1$, then $W_2\hookrightarrow\hookrightarrow C(a,b;Y)$.
	\end{lemma}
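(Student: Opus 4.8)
The plan is to recognize Lemma \ref{Aubin} as a form of the Aubin--Lions--Simon compactness lemma and to reduce it to two ingredients: an Ehrling-type interpolation inequality that uses the \emph{compact} embedding $X\hookrightarrow\hookrightarrow Y$, and the vector-valued Riesz--Fr\'echet--Kolmogorov characterization of relative compactness in $L^p(a,b;Y)$ (relative compactness of the time-averages over subintervals together with uniform smallness of the time-translates). The bound on $u_t$ will be used only to control time-translates and moduli of continuity in the \emph{weakest} space $Z$, while the interpolation inequality transfers this control to the target space $Y$.

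First I would prove the interpolation inequality: for every $\eta>0$ there is $C_\eta>0$ with
\begin{equation*}
\|v\|_Y\le \eta\|v\|_X+C_\eta\|v\|_Z\qquad\text{for all }v\in X.
\end{equation*}
The argument is by contradiction. If it failed for some $\eta_0>0$, one could choose $v_n\in X$ with $\|v_n\|_Y=1$ and $1=\|v_n\|_Y>\eta_0\|v_n\|_X+n\|v_n\|_Z$; then $\{v_n\}$ is bounded in $X$ and $\|v_n\|_Z\to0$, so by $X\hookrightarrow\hookrightarrow Y$ a subsequence converges in $Y$, hence in $Z$, to a limit that must be $0$ in $Z$ and therefore $0$ in $Y$, contradicting $\|v_n\|_Y=1$.

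For part (i) with $p<\infty$ I would take a bounded sequence $\{u_n\}\subset W_1$ and verify the two conditions of the criterion in $L^p(a,b;Y)$. The averages $\int_{t_1}^{t_2}u_n\,dt$ are bounded in $X$, hence relatively compact in $Y$ by $X\hookrightarrow\hookrightarrow Y$. For the translates, $u_n(t+h)-u_n(t)=\int_t^{t+h}\partial_t u_n\,ds$ together with H\"older in time gives $\|u_n(\cdot+h)-u_n(\cdot)\|_{L^p(a,b-h;Z)}\le C|h|^{1-1/q}$ uniformly in $n$ when $q>1$ (with an analogous, slightly more delicate estimate for $q=1$). Applying the interpolation inequality to $u_n(t+h)-u_n(t)$ bounds the same translates in $Y$ by $\eta\cdot(\text{uniform }L^p(a,b;X)\text{-bound})+C_\eta\,C|h|^{1-1/q}$; letting $h\to0$ and then $\eta\to0$ shows the translates vanish uniformly in $L^p(a,b;Y)$. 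The criterion then gives $W_1\hookrightarrow\hookrightarrow L^p(a,b;Y)$, and the case $p=\infty$ follows from the same estimates with its $L^\infty$ version.

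Finally, for the continuity statements I would use Arzel\`a--Ascoli. When the distributional derivative lies in $L^r$ with $r>1$, the bound $\|u(t)-u(s)\|_Z\le|t-s|^{1-1/r}\|\partial_t u\|_{L^r(a,b;Z)}$ makes a bounded family equicontinuous into $Z$. In case (i) I would get pointwise precompactness in $Z$ by approximating $u_n(t)$ with short-time averages $h^{-1}\int_t^{t+h}u_n\,ds$, which are bounded in $X$ (hence precompact in $Z$, since $X\hookrightarrow\hookrightarrow Y\hookrightarrow Z$) up to an error $O(h^{1-1/q})$ in $Z$; Arzel\`a--Ascoli then yields $W_1\hookrightarrow\hookrightarrow C(a,b;Z)$. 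In case (ii) the extra bound $u\in L^\infty(a,b;X)$ makes each $u(t)$ bounded in $X$, hence precompact in $Y$, and feeding the pointwise $X$-bound into the interpolation inequality upgrades the equicontinuity from $Z$ to $Y$, giving $W_2\hookrightarrow\hookrightarrow C(a,b;Y)$. The main obstacle is precisely this transfer step: the derivative bound only ever yields control in the weak space $Z$, and it is the compact embedding $X\hookrightarrow\hookrightarrow Y$, used through the interpolation inequality (and, for the $C$-statements, through the precompactness of averages), that converts this weak control into compactness in the target space.
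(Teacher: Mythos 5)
The paper does not prove this lemma at all: it is quoted verbatim as a classical result with a citation to Lions, so there is no in-paper argument to compare against. Your proof is the standard one for the Aubin--Lions--Simon lemma --- Ehrling's interpolation inequality obtained by contradiction from $X\hookrightarrow\hookrightarrow Y\hookrightarrow Z$, Simon's (Kolmogorov--Riesz type) criterion for relative compactness in $L^p(a,b;Y)$ via precompact averages plus uniformly small time-translates, and Arzel\`a--Ascoli for the $C$-valued conclusions --- and the logic is sound, including the key observation that the derivative only controls translates in $Z$ and that Ehrling transfers this to $Y$. The one place you are too quick is the sentence ``the case $p=\infty$ follows from the same estimates with its $L^\infty$ version'': when $q=1$ the bound $\sup_t\|u_n(t+h)-u_n(t)\|_Z\le\sup_t\int_t^{t+h}\|\partial_t u_n\|_Z\,ds$ need not tend to $0$ uniformly in $n$ as $h\to0$ (the $L^1$ mass of $\partial_t u_n$ can concentrate), so the translate condition in $L^\infty(a,b;Z)$ is not verified; this borderline case $p=\infty$, $q=1$ genuinely requires $q>1$ (as in Simon's Corollary 4) or a separate argument. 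Since the only use of the lemma in the paper is part (ii) and the $q>1$ clause of part (i), this caveat does not affect anything downstream.
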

	\begin{lemma}\label{Gronwall}
		Let $\Phi:\mathbb{R^+}\rightarrow\mathbb{R^+}$ be an absolutely continuous function satisfying
		\[\frac{d}{dt}\Phi(t)+2\alpha\Phi(t)\leq h(t)\Phi(t)+k,\]
		where $\alpha>0,\ k\geq0$ and $\int_{s}^{t}h(\tau)d\tau \leq \alpha(t-s)+m$, for all $t\geq s\geq0$ and some $m\geq0.$ Then
		\[\Phi(t)\leq\Phi(0)e^me^{-\alpha t}+\frac{ke^m}{\alpha},\ \forall t\geq0.\]
	\end{lemma}

\section{Well-posedness}
	Firstly, we recall some definitions and results about the weak solution and Shatah-Struwe solution of problem (\ref{eq1.1}) in \cite{Zelik1}.
	\begin{Def}
		For any $T>0$, a function $u(t),\ t\in[0,T]$ is said to be a weak solution of problem (\ref{eq1.1}) if $\xi_u\in L^\infty(0,T;\e)$ and Eq. (\ref{eq1.1}) is satisfied in the sense of distribution, i.e.
		\[-\int_{0}^{T}(u_t,\phi_t)dt+\gamma\int_{0}^{T}(u_t,\phi)dt+\int_{0}^{T}(\nabla u,\nabla\phi)dt+\int_{0}^{T}(f(u),\phi)dt=\int_{0}^{T}(g,\phi)dt,\]
		for any $\phi\in C_c^\infty((0,T)\times\Omega)$.
	\end{Def}
	Since the uniqueness of the weak solution is still unknown in the sup-cubic case, we need some extra regularity.
	\begin{Def}\citep[Definition 2.8]{Zelik1}\label{SS}
		A weak solution $u(t),\ t\in[0,T]$ is a  Shatah-Struwe solution of problem (\ref{eq1.1}) if the following additional regularity holds:
		\[u\in L^4(0,T;L^{12}).\]
	\end{Def}
	V. Kalantarov et al.\cite{Zelik1} proved the well-posedness of the Shatah-Struwe solution and the existence of a compact global attractor of the corresponding semigroup, namely
	\begin{lemma}\cite{Zelik1}\label{lemma3.3}
		Let $f$ satisfies assumptions (\ref{1.2})-(\ref{1.7}).\\
		(i) For any $\xi_0\in\e$, there exists a unique Shatah-Struwe solution $u(t),\ t\in\mathbb{R^+}$ and this solution satisfies the following dissipative energy estimate:
		\begin{equation}\label{3-1}
			\n\xi_u(t)
			\n_\e\leq Q(\n\xi_0\n_\e)e^{-\alpha t}+Q(\n g\n_{L^2}),\quad t\in\mathbb{R^+},
		\end{equation}
		and the following Strichartz estimate:
		\begin{equation}\label{3-2}
			\n u\n_{L^4(0,T;L^{12})}\leq Q(\xi_0,T),\quad T\geq0,
		\end{equation}
	    where the monotone function $Q$ and the positive constant $\alpha$ are independent of $u$.\\
		(ii) The Shatah-Struwe solution semigroup $(\e,S(t))$ associated with equation (\ref{eq1.1}) possesses a global attractor $\mathcal{A}_0$ in $\e$ which is a bounded set in $\e_1$.
	\end{lemma}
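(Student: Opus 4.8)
\emph{Local well-posedness and uniqueness.} Since the assertion is \cite{Zelik1}, I only indicate the strategy I would follow. The plan is to construct the solution by a fixed-point argument in $\mathcal{X}_T:=C([0,T];\e)\cap L^4(0,T;L^{12})$: given $v\in\mathcal{X}_T$, let $\Lambda v$ be the solution of the linear problem (\ref{eq2.1}) with right-hand side $G=g-f(v)$ and data $\xi_0$, so that by Lemmas \ref{lemma2.1} and \ref{lemma2.2} the map $\Lambda$ sends $\mathcal{X}_T$ into itself once $g-f(v)\in L^1(0,T;L^2)$. The decisive nonlinear estimate, using only the critical growth (\ref{1.2}) in the form $|f(s)|\leq C(1+|s|^5)$, is
\[\n f(v)\n_{L^1(0,T;L^2)}\leq CT+C\n v\n_{L^\infty(0,T;L^6)}\n v\n_{L^4(0,T;L^{12})}^4,\]
obtained from $\n|v|^5\n_{L^2}=\n v\n_{L^{10}}^5$ and the interpolation $\n v\n_{L^{10}}\leq\n v\n_{L^6}^{1/5}\n v\n_{L^{12}}^{4/5}$, the $L^\infty L^6$ factor being absorbed by $H^1_0\hookrightarrow L^6$ and the energy. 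For the contraction and for uniqueness I would use $|f(a)-f(b)|\leq C(1+|a|^4+|b|^4)|a-b|$ with the H\"older split $\tfrac13+\tfrac16+\tfrac12=1$, which gives
\[\frac{d}{dt}\n\xi_{u_1}-\xi_{u_2}\n_\e^2\leq C\big(1+\n u_1\n_{L^{12}}^4+\n u_2\n_{L^{12}}^4\big)\n\xi_{u_1}-\xi_{u_2}\n_\e^2;\]
since $\int_0^T(\n u_1\n_{L^{12}}^4+\n u_2\n_{L^{12}}^4)\,dt<\infty$ for Shatah-Struwe solutions, Gronwall's inequality yields uniqueness, and shrinking $T$ (or invoking Lemma \ref{coro2.4} to make the iterate's Strichartz norm small) makes $\Lambda$ a contraction.

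\emph{Global existence and the a priori estimates.} To pass from local to global solutions and to obtain (\ref{3-1}) I would introduce the Lyapunov functional
\[\mathcal{L}(t)=\tfrac12\n u_t\n_{L^2}^2+\tfrac12\n\nabla u\n_{L^2}^2+\int_\Omega F(u)\,dx-(g,u)+\varepsilon(u_t,u)+\tfrac{\varepsilon\gamma}{2}\n u\n_{L^2}^2,\]
which for small $\varepsilon$ is equivalent to $\n\xi_u\n_\e^2$ up to additive constants: (\ref{1.3}) and (\ref{1.5}) furnish the coercivity $\tfrac12\n\nabla u\n_{L^2}^2+\int_\Omega F(u)\geq c\n\nabla u\n_{L^2}^2+\sigma\n u\n_{L^6}^6-C$, while (\ref{1.4})--(\ref{1.6}) control the cross terms produced by the multiplier $\varepsilon u$. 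Testing the equation with $u_t+\varepsilon u$ gives $\tfrac{d}{dt}\mathcal{L}+\alpha\mathcal{L}\leq C$, hence (\ref{3-1}) after Gronwall. This a priori bound rules out blow-up of the energy; combining it with the bounded-domain Strichartz estimates and the Morawetz-Pohozhaev identity (as in \cite{Zelik1}) rules out concentration of the $L^4L^{12}$-norm, so the Strichartz norm stays finite on every finite interval, which is (\ref{3-2}) and at the same time provides global continuation.

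\emph{Global attractor and regularity.} For (ii), (\ref{3-1}) yields a bounded absorbing set in $\e$, so $(\e,S(t))$ is dissipative. The essential step is asymptotic smoothness. Since Shatah-Struwe solutions obey the energy identity, I would run the energy (Ball-type) method: extract a weak limit of an absorbed sequence $S(t_n)\xi_0^n$ and upgrade it to strong $\e$-convergence by proving convergence of the energies, the critical term being handled through the uniform local Strichartz control of Lemma \ref{coro2.4} on the precompact families that arise. Dissipativity together with asymptotic smoothness then produces the compact global attractor $\mathcal{A}_0$. Its boundedness in $\e_1$ would follow by a bootstrap along the complete, backward-bounded trajectories of $\mathcal{A}_0$: writing $-\Delta u=g-f(u)-\gamma u_t-u_{tt}$ and feeding the Strichartz regularity of \cite{Zelik1} into higher-order Strichartz/energy estimates, one gains derivatives until the $\e$-bound is upgraded to an $\e_1$-bound.

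\emph{Main difficulty.} The genuine obstacle throughout is the energy-criticality of the quintic term. At the level of well-posedness it is why the extra $L^4L^{12}$ regularity---hence the Strichartz estimates of Lemmas \ref{lemma2.1}--\ref{lemma2.2} and the Morawetz-Pohozhaev identity---is indispensable, the energy alone failing to close either the fixed point or the global continuation. At the level of the attractor the criticality blocks the classical ``exponentially decaying plus smoother'' splitting, so compactness has to be extracted from the energy equality reinforced by the non-concentration information in Lemma \ref{coro2.4}.
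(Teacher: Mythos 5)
This lemma is quoted from \cite{Zelik1}; the paper offers no proof of its own, so there is nothing internal to compare against. Your sketch is a faithful outline of the strategy actually used in that reference (Strichartz-based fixed point with the $L^{10}_tL^{10}_x$ interpolation, multiplier $u_t+\varepsilon u$ with (\ref{1.6}) closing the Lyapunov inequality, Morawetz--Pohozhaev non-concentration for global continuation, energy method for asymptotic compactness, bootstrap for $\e_1$-regularity), and it correctly flags the one delicate point — that in the critical quintic case the contraction/Strichartz control is datum-dependent rather than uniform over bounded sets of $\e$, which is precisely why (\ref{3-2}) reads $Q(\xi_0,T)$ and why the present paper later needs Proposition \ref{fact} and Lemma \ref{lemma4.2} to recover uniformity on bounded sets of $\e_1$ and on compact sets of $\e$.
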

	\begin{remark}\label{remark3.4}
		In the sub-quintic case, i.e. $|f''(s)|\leq C(1+|u|^{3-\kappa})$ with some $0<\kappa\leq3$, we can obtain the same results by requiring $f$ to satisfy (\ref{1.3}) only. Moreover, we have a better estimate than (\ref{3-1}) and (\ref{3-2}):
		\begin{equation}\label{3-3}
			\n\xi_u(t)
			\n_\e+\n u\n_{L^4(t,t+1;L^{12})}\leq Q(\n\xi_0\n_\e)e^{-\alpha t}+Q(\n g\n_{L^2}),\quad t\in\mathbb{R^+}.
		\end{equation}
	\end{remark}
    See also \cite{Zelik1} for details.\\
	\indent In this paper, we are concerned with the strong solution of problem (\ref{eq1.1}), the definition is as follows:
	\begin{Def}
		For any $T>0$, a function $u(t),\ t\in[0,T]$ is said to be a strong solution of problem (\ref{eq1.1}) if $\xi_u\in L^\infty(0,T;\e_1)$ and Eq. (\ref{eq1.1}) is satisfied in $L^2$ for almost all $t\in[0,T]$.
	\end{Def}
    Before discussing the well-posedness of strong solution, we prove a crucial fact that if the initial datas are taken from a bounded set of $\e_1$ then the corresponding Shatah-Struwe solutions have a uniform $L^4(L^{12})-$norm estimate.
    \begin{prop}\label{fact}
    	Let $B\subset\e_1$ be a bounded set. Then there exist a time $T_0=T_0(B)$ and a constant $C=C(B)$ such that
    	\begin{equation}
    		\n u\n_{L^4(0,T_0;L^{12})}\leq C
    	\end{equation}
        for any Shatah-Struwe solution $u(t)$ with initial data $\xi_u(0)\in B$, where the constant $C$ is independent of $u$.
    \end{prop}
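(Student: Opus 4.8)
The plan is to upgrade the solution-dependent Strichartz bound \eqref{3-2} to one that is uniform over all initial data in $B$, by exploiting that a set bounded in $\e_1$ is precompact in $\e$ and then running a short-time bootstrap against the superlinear feedback Lemma \ref{Savostianov}. First I would observe that since $\hh\hookrightarrow\hookrightarrow H^1_0$ and $H^1_0\hookrightarrow\hookrightarrow L^2$, the bounded set $B\subset\e_1$ is precompact in $\e$; let $K:=\overline{B}^{\,\e}$ be its closure, a compact subset of $\e$. For a Shatah--Struwe solution $u$ with $\xi_u(0)\in B$ I would split $u=v+w$, where $v$ solves the homogeneous linear problem \eqref{eq2.1} (with $G\equiv 0$) and the same data $\xi_v(0)=\xi_u(0)$, and $w$ solves \eqref{eq2.1} with forcing $G=g-f(u)$ and zero initial data. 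By Lemma \ref{coro2.4} applied to the compact set $K$, for any $\varepsilon>0$ there is $T_1=T_1(\varepsilon,K)$ with $\n v\n_{L^4(0,T_1;L^{12})}\leq\varepsilon$, uniformly in $\xi_u(0)\in B$; this is the step where compactness in $\e$, hence the $\e_1$-bound, is essential.

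Next I would control the nonlinear forcing. From \eqref{1.2} together with $f(0)=0$ one gets $|f(s)|\leq C(1+|s|^5)$, so $\n f(u)\n_{L^2}\leq C(1+\n u\n_{L^{10}}^5)$. Interpolating $L^{10}$ between $L^6$ and $L^{12}$ gives $\n u\n_{L^{10}}\leq\n u\n_{L^6}^{1/5}\n u\n_{L^{12}}^{4/5}$, whence $\n u\n_{L^{10}}^5\leq\n u\n_{L^6}\,\n u\n_{L^{12}}^4$. Using $H^1_0\hookrightarrow L^6$ and the dissipative energy estimate \eqref{3-1} (uniform over the bounded-in-$\e$ set $K$) to bound $\sup_{t\geq0}\n u\n_{L^6}\leq C(B)$, I obtain
\[\n f(u)\n_{L^1(0,T;L^2)}\leq CT+C(B)\,\n u\n_{L^4(0,T;L^{12})}^4.\]
Applying the Strichartz estimate \eqref{Stri} to $w$ (which has zero data) and writing $y(T):=\n u\n_{L^4(0,T;L^{12})}$, the decomposition yields, for $T\leq T_0\leq\min\{1,T_1\}$ so that the Strichartz constant $\bar C$ is uniformly bounded (cf. \eqref{esti2.4}),
\[y(T)\leq\varepsilon+\bar C\big(T\n g\n_{L^2}+CT+C(B)\,y(T)^4\big).\]

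Finally I would close the argument with Lemma \ref{Savostianov}. The function $y$ is nonnegative, continuous on $[0,T_0)$ (since $u\in L^4_{loc}(L^{12})$) with $y(0)=0$. I would first fix $\varepsilon=\varepsilon(B)$ small enough that the smallness hypothesis $2\varepsilon<\tfrac12(\tfrac{1}{2C_0})^{1/3}$ holds with $C_0:=\bar C\,C(B)$ and $\sigma=4$; then shrink $T_0=T_0(B)\leq\min\{1,T_1(\varepsilon,K)\}$ so that $\bar C(T_0\n g\n_{L^2}+CT_0)\leq\varepsilon$. This reduces the bound to $y(T)\leq C_0\,y(T)^4+2\varepsilon$ on $[0,T_0)$, and Lemma \ref{Savostianov} gives $y(T)\leq 2\varepsilon$ there, i.e. $\n u\n_{L^4(0,T_0;L^{12})}\leq C(B)$. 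The main obstacle is the quintic nonlinearity: $f(u)$ produces the fifth power $\n u\n_{L^{12}}^5$, one power above what the Strichartz norm self-controls, so a naive estimate does not close. The interpolation converting this into $\n u\n_{L^6}\,\n u\n_{L^{12}}^4$ with a uniform $H^1$-energy factor is exactly what makes the feedback superlinear of the form required by Lemma \ref{Savostianov}, while the precompactness of $B$ in $\e$ is what turns the a priori $\xi_0$-dependent bound \eqref{3-2} into a uniform one.
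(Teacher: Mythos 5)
Your proposal is correct and follows essentially the same route as the paper: the same free/Duhamel splitting, smallness of the linear part obtained from the compactness of $B$ in $\e$ via Lemma \ref{coro2.4}, the interpolation $\n u\n_{L^{10}}^5\leq\n u\n_{L^6}\n u\n_{L^{12}}^4$ to turn the quintic forcing into a superlinear feedback, and closure by Lemma \ref{Savostianov}. The only cosmetic differences are that the paper keeps $g$ in the $v$-equation and runs the bootstrap on $\n\xi_w\n_\e+\n w\n_{L^5(0,t;L^{10})}$ (so $\sigma=5$) rather than on $\n u\n_{L^4(0,T;L^{12})}$ (so $\sigma=4$).
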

    \begin{proof}
    	The proof is similar to the one of Proposition 3.1 in \cite{Zelik1}. We split the solution $u=v+w$ where $v,w$ solve the following problems respectively
    	\[v_{tt}+\gamma v_t-\Delta v=g,\quad \xi_v(0)=\xi_u(0);\]
    	\[w_{tt}+\gamma w_t-\Delta w=-f(v+w),\quad \xi_w(0)=0.\]
    	Due to the compact embedding $\e_1\hookrightarrow\hookrightarrow\e$, the set $[B]_\e$(the closure of $B$ in $\e$) is compact in $\e$. Then by Lemma \ref{coro2.4}, for any $\varepsilon>0$, there exists $T_\varepsilon=T_\varepsilon(\varepsilon,B)>0$, such that
    	\begin{equation}\label{v}
    		\n \xi_v(t)\n_\e\leq C,\quad \n v\n_{L^4(0,t;L^{12})}\leq\varepsilon,\quad \forall t\leq T_\varepsilon,\ \forall\xi_v(0)\in B,
    	\end{equation}
        where $C$ depends on $B$ and $\n g\n_{L^2}$. Using the growth condition (\ref{1.2}) and interpolation inequality, we have $
        \forall t\leq T_\varepsilon,$
        \begin{align*}
        	\n f(v+w)\n_{L^1(0,t;L^{2})}&\leq C\n 1+|v|^5+|w|^5\n_{L^1(0,t;L^{2})}\\
        	&\leq C(t+\n v\n^5_{L^5(0,t;L^{10})}+\n w\n^5_{L^5(0,t;L^{10})})\\
        	&\leq C(t+\n v\n^4_{L^4(0,t;L^{12})}\n v\n_{L^\infty(0,t;H^1_0)}+\n w\n^5_{L^5(0,t;L^{10})})\\
        	&\leq C(t+\varepsilon^4+\n w\n^5_{L^5(0,t;L^{10})}).
        \end{align*}
        Applying estimate (\ref{esti2.4}) to the variable $w$,
        \begin{align*}
        	\n\xi_w(t)\n_\e+\n w\n_{L^5(0,t;L^{10})}+\n w\n_{L^4(0,t;L^{12})}&\leq C\n f(v+w)\n_{L^1(0,t;L^{2})}\\
        	&\leq C(t+\varepsilon^4)+C(\n\xi_w(t)\n_\e+\n w\n_{L^5(0,t;L^{10})})^5.
        \end{align*}
        Denote $Y(t):=\n\xi_w(t)\n_\e+\n w\n_{L^5(0,t;L^{10})}$, then we get
        \[y(t)\leq C(t+\varepsilon^4)+Cy(t)^5,\quad t\leq T_\varepsilon,\quad y(0)=0.\]
        Applying Lemma \ref{Savostianov}, choosing $\varepsilon$ and $T_\varepsilon$ such that $0<C(T_\varepsilon+\varepsilon^4)\leq \frac{1}{2}(\frac{1}{2C})^\frac{1}{4},$ we get
        \[y(t)\leq 2C(T_\varepsilon+\varepsilon^4),\quad\forall t\leq T_\varepsilon.\]
        Now, fixing $\varepsilon$ and $T_\varepsilon=:T_0$ small to satisfy the above inequality and then
        \[\n\xi_w(t)\n_\e+\n w\n_{L^4(0,t;L^{12})}\leq C,\quad \forall t\leq T_0.\]
        Combining the above estimate with (\ref{v}), we end up with
        \[\n u\n_{L^4(0,T_0;L^{12})}\leq C,\quad \forall \xi_u(0)\in B,\]
        where the time $T_0$ and constant $C$ may depend on $B$, but are independent of $u$.

    \end{proof}
	\begin{thm}\label{well}
		Let $T>0$ be arbitrary and $f$ satisfy assumptions (\ref{1.2})-(\ref{1.7}). Then for any initial data $\xi_0\in\e_1$, problem (\ref{eq1.1}) admits a strong solution $u$. Moreover, the strong solutions depend continuously on the initial data in $\e_1$. That is to say, if ${u_1},\ {u_2}$ are two strong solutions corresponding to the initial data $\xi_{u_1}(0),\ \xi_{u_2}(0)$ respectively, then
		\begin{equation}\label{3-6-0}
			\n\xi_{u_1}(t)-\xi_{u_2}(t)\n_{\e_1}\leq C_{r,T}\n\xi_{u_1}(0)-\xi_{u_2}(0)\n_{\e_1},\quad \forall t\in[0,T],
		\end{equation}
		where $r>0$ is a constant such that $\xi_{u_1}(0),\ \xi_{u_2}(0)\in\{\xi\in\e_1:\n\xi\n_{\e_1}\leq r\}.$ In particular, the strong solution is unique.
	\end{thm}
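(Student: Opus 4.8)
The plan is to obtain the strong solution as the Shatah--Struwe solution already furnished by Lemma \ref{lemma3.3}(i): since $\xi_0\in\e_1\subset\e$ there exists a unique Shatah--Struwe solution $u$, so it suffices to upgrade its regularity to $\xi_u\in L^\infty(0,T;\e_1)$ and then to prove the Lipschitz bound (\ref{3-6-0}), from which uniqueness follows. All estimates below are a priori; I would make them rigorous on a Galerkin approximation (with compactness supplied by Lemma \ref{Aubin}) and then identify the limit with $u$. The quantitative input driving everything is that, by Proposition \ref{fact} (or by (\ref{3-2})), the Strichartz norm $\n u\n_{L^4(0,T;L^{12})}$ is finite and controlled by $\n\xi_0\n_{\e_1}$ and $T$; in particular $\int_0^T\n u(s)\n_{L^{12}}^4\,ds<\infty$.

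For the regularity I would differentiate the equation in time. Writing $\theta=u_t$ and $\xi_\theta=(\theta,\theta_t)$, the function $\theta$ solves $\theta_{tt}+\gamma\theta_t-\Delta\theta=-f'(u)\theta$ with $\xi_\theta(0)=(u_1,\ \Delta u_0-\gamma u_1-f(u_0)+g)\in\e$ (here $u_0\in\hh\hookrightarrow L^\infty$ makes $f(u_0)\in L^2$). Testing with $\theta_t$ and using (\ref{1.2}) in the form $|f'(s)|\leq C(1+|s|^4)$ together with Hölder's inequality,
\[ \left|\int f'(u)\theta\,\theta_t\,dx\right|\leq C(1+\n u\n_{L^{12}}^4)\,\n\theta\n_{H^1_0}\n\theta_t\n_{L^2}\leq C(1+\n u\n_{L^{12}}^4)\,\n\xi_\theta\n_\e^2, \]
so $\frac{d}{dt}\n\xi_\theta\n_\e^2\leq C(1+\n u\n_{L^{12}}^4)\n\xi_\theta\n_\e^2$; since the coefficient is integrable on $[0,T]$, Gronwall's inequality gives $u_t\in L^\infty(0,T;H^1_0)$ and $u_{tt}\in L^\infty(0,T;L^2)$. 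To reach $u\in L^\infty(0,T;H^2)$ I would then control $f(u)$ in $L^2$: from $\frac{d}{dt}\n f(u)\n_{L^2}^2=2\int f(u)f'(u)u_t\,dx$ and the Hölder split $|u|^9|u_t|=|u|^5\cdot|u|^4|u_t|$ one gets $\int|u|^9|u_t|\,dx\leq\n u\n_{L^{10}}^5\n u\n_{L^{12}}^4\n u_t\n_{L^6}$. The extra condition (\ref{1.7}) yields the lower bound $|f(s)|\geq c|s|^5-C$, hence $\n u\n_{L^{10}}^5\leq C(1+\n f(u)\n_{L^2})$, which reduces the otherwise non-integrable power of the Strichartz norm to the integrable $\n u\n_{L^{12}}^4$. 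A further application of Gronwall's inequality then gives $f(u)\in L^\infty(0,T;L^2)$, and reading the equation as $-\Delta u=g-u_{tt}-\gamma u_t-f(u)\in L^\infty(0,T;L^2)$ together with elliptic regularity produces $\xi_u\in L^\infty(0,T;\e_1)$.

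For continuous dependence, let $u_1,u_2$ be two strong solutions with data in $\{\n\xi\n_{\e_1}\leq r\}$ and put $w=u_1-u_2$, which solves $w_{tt}+\gamma w_t-\Delta w=-(f(u_1)-f(u_2))$ with \emph{no} forcing term. By the regularity just obtained and $\hh\hookrightarrow L^\infty$, both $u_i$ are bounded in $L^\infty(0,T;L^\infty)$ by a constant $C_{r,T}$, so $f'(u_i)$ and $f''(u_i)$ are bounded in $L^\infty$ on $[0,T]$, which effectively linearizes the difference equation. Testing with $-\Delta w_t$ (legitimate at the Galerkin level) and writing $\nabla(f(u_1)-f(u_2))=f'(u_1)\nabla w+(f'(u_1)-f'(u_2))\nabla u_2$, I would bound both resulting terms by $C_{r,T}\n\Delta w\n_{L^2}\n\nabla w_t\n_{L^2}$ using $|f'(u_1)-f'(u_2)|\leq C_{r,T}|w|$, the Poincaré-type inequality $\n\nabla w\n_{L^2}\leq C\n\Delta w\n_{L^2}$ and $\n\nabla u_2\n_{L^3}\leq C_{r,T}$. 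This gives $\frac{d}{dt}\n\xi_w\n_{\e_1}^2\leq C_{r,T}\n\xi_w\n_{\e_1}^2$, and Gronwall's inequality yields (\ref{3-6-0}); uniqueness is the special case $\xi_{u_1}(0)=\xi_{u_2}(0)$.

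The hard part is the $L^\infty(0,T;H^2)$ bound at the critical quintic exponent: a naive elliptic bootstrap only delivers $u\in L^1(0,T;H^2)$, because the pointwise $L^{12}$-norm of $u$ is uncontrolled and only its fourth power is integrable in time. The device that rescues the argument is to run the estimate on $\n f(u)\n_{L^2}$ directly and to exploit the coercive lower bound from (\ref{1.7}) so as to trade $\n u\n_{L^{10}}^5$ for $\n f(u)\n_{L^2}$, retaining only the integrable power $\n u\n_{L^{12}}^4$; once this is secured, the $L^\infty$-boundedness of strong solutions makes the $\e_1$ continuous-dependence estimate routine.
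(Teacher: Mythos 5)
Your proposal is correct, and its skeleton coincides with the paper's: differentiate the equation in time, test $\theta_{tt}+\gamma\theta_t-\Delta\theta=-f'(u)\theta$ with $\theta_t$, and close a Gronwall estimate whose coefficient $1+\n u\n_{L^{12}}^4$ is integrable thanks to the Strichartz control of Proposition \ref{fact}; the continuous-dependence step (testing the difference equation with $-\Delta w_t$ and using $\hh\hookrightarrow L^\infty$ to linearize) is also essentially identical to the paper's Step 2. The one genuine divergence is how you recover $u\in L^\infty(0,T;H^2)$. The paper reads the equation as the elliptic problem $-\Delta u+f(u)=g-u_{tt}-\gamma u_t\in L^2$, multiplies by $-\Delta u$, and uses only the one-sided bound (\ref{1.4}), $f'\geq-K$, so that $(f(u),-\Delta u)=\int f'(u)|\nabla u|^2\geq-K\n\nabla u\n_{L^2}^2$ has a harmless sign — no bound on $\n f(u)\n_{L^2}$ is ever needed. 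You instead propagate $f(u)\in L^\infty(0,T;L^2)$ by a separate Gronwall argument, using the coercivity (\ref{1.7}) to convert $\n u\n_{L^{10}}^5$ into $1+\n f(u)\n_{L^2}$ and leave only the integrable power $\n u\n_{L^{12}}^4$; this is a valid alternative (the Hölder exponents $\tfrac12+\tfrac13+\tfrac16=1$ check out, and $|f(s)|\geq c|s|^5-C$ does follow from (\ref{1.7}) by integration), but it costs you the stronger hypothesis (\ref{1.7}) where the paper's sign trick needs only (\ref{1.4}) — a point worth noting if one wants the well-posedness statement under minimal assumptions. Relatedly, your closing remark that "a naive elliptic bootstrap only delivers $u\in L^1(0,T;H^2)$" is accurate for the bootstrap that estimates $\n f(u)\n_{L^2}\leq C(1+\n u\n_{L^{12}}^4\n u\n_{H^1_0})$, but it is not what the paper does: its elliptic test with $-\Delta u$ yields the $L^\infty(0,T;H^2)$ bound directly.
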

	\begin{proof}
		\textbf{Step 1.} \emph{Existence.} The existence part follows directly from Proposition 3.10 in \cite{Zelik1}, and we write here for completeness. We just need to verify that the Shatah-Struwe solution given in Lemma \ref{lemma3.3} is actually a strong solution while the initial data $\xi_0\in\e_1$ and we give below only the formal proof which can be justified by using Faedo-Galerkin method.\\
		\indent Let $v(t):=u_t(t)$. Then from Eq. (\ref{eq1.1}) and condition (\ref{1.2}),
		\begin{equation}\label{3-6-1}
			\xi_v(0)=(u_t(0),u_{tt}(0))=(u_1,\Delta u_0-f(u_0)-\gamma u_1+g)\in\e
		\end{equation}
		and the function $v$ solves
		\begin{equation}\label{eq3.1}
		v_{tt}+\gamma v_t-\Delta v=-f'(u)v,\quad\xi_v(0)\in\e.
		\end{equation}
	    Multiplying Eq. (\ref{eq3.1}) by $v_t$ and integrating over $x\in\Omega$, we get
	    \begin{equation}\label{3-6-2}
	    	\frac{1}{2}\frac{d}{dt}\n\xi_v(t)
	    	\n_\e^2+\gamma\n v_t\n_{L^2}^2=-(f'(u)v,v_t).
	    \end{equation}
	    The right hand side of (\ref{3-6-2}) obeys the estimate
	    \begin{align*}
	    	|(f'(u)v,v_t)|&\leq C((1+|u|^4)|v|,|v_t|)\\
	    	&\leq C\n1+|u|^4\n_{L^3}\n v\n_{L^6}\n v_t\n_{L^2}\\
	    	&\leq C(1+\n u\n_{L^{12}}^4)\n \xi_v\n_\e^2.
	    \end{align*}
        Inserting the above estimate into (\ref{3-6-2}) and using Gronwall inequality, we have
        \begin{equation}\label{3-6-3}
        	\n\xi_v(t)\n_\e^2\leq \n\xi_v(0)\n_\e^2\exp\left(CT+\int_{0}^{T}\n u\n_{L^{12}}^4ds\right),\quad \forall t\in[0,T].
        \end{equation}
        Now, we rewrite Eq. (\ref{eq1.1}) in the form
        \[-\Delta u(t)+f(u(t))=g-v_t(t)-\gamma v(t)\in L^2.\]
        Multiplying this elliptic equation by $-\Delta u$ in $L^2$ and using the condition (\ref{1.4}), we obtain
        \begin{equation}\label{3-6-4}
        	\n \Delta u(t)\n_{L^2}^2\leq C(\n\xi_v(t)\n_\e^2+\n g\n_{L^2}^2)+K\n\xi_u(t)\n_\e^2,\quad \forall t\in[0,T].
        \end{equation} 
        Combining with estimates (\ref{3-1}), (\ref{3-6-1}), (\ref{3-6-3}), (\ref{3-6-4}), we end up with
        \begin{equation}\label{3-6-5}
        	\begin{split}
        		\n\xi_u(t)\n_{\e_1}&\leq C(\n\xi_v(t)\n_\e+\n g\n_{L^2}+\n\xi_u(t)\n_\e)\\
        		&\leq Q(\n\xi_0\n_{\e_1})\exp(CT+C\n u\n_{L^4(0,T;L^{12})})+Q(\n g\n_{L^2},\n \xi_0\n_{\e}),\ \forall t\in[0,T],
        	\end{split}
        \end{equation}
        where the monotone function $Q$ and the constant $C$ is independent of $u$. \\
         \indent Moreover, due to Proposition \ref{fact}, if $B$ is a bounded set in $\e_1$, then for any strong solution $u(t),\ t\in[0,T]$ with initial data $\xi_u(0)\in B$, we have the uniform estimate
        \[\n\xi_u(t)\n_{\e_1}\leq Q(B,T,\n g\n_{L^2}),\quad \forall t\in[0,T]\]
        for some function $Q$ monotone increasing in $T$ and $\n g\n_{L^2}$.\\
        \textbf{Step 2.} \emph{Uniqueness and continuous dependence.} Suppose that $u_1$ and $u_2$ are two strong solutions of problem (\ref{eq1.1}) with initial data $\n\xi_{u_i}(0)\n_{\e_1}\leq r\ (i=1,2)$. According to \textbf{Step 1}, 
        \[\n\xi_{u_i}\n_{L^\infty(0,T;\e_1)}\leq C_{r,T},\quad i=1,2,\]
        for some constant $C_{r,T}$ dependent of $r$ and $T$. Let $w(t)=u_1(t)-u_2(t)$. Using the growth condition (\ref{1.2}), we have for $i=1,2,$
        \[\n \Delta f(u_i)\n_{L^2}=\n f'(u_i)\Delta u_i+f''(u_i)|\nabla u_i|^2\n_{L^2}\leq C(1+\n u_i\n_{H^2}^4)\n u_i\n_{H^2},\]
        i.e. $f(u_i)\in L^\infty(0,T;H^2)$. Let $w(t)=u_1(t)-u_2(t)$.
        Then $w(t)$ solves
        \[w_{tt}+\gamma w_t-\Delta w=f(u_2)-f(u_1)\in L^\infty(0,T;H^1_0),\quad \xi_w(0)=\xi_{u_1}(0)-\xi_{u_2}(0).\]
        Multiplying this equation by $-\Delta w_t$ and integrating over $x\in\Omega$, we have
        \begin{equation}\label{3-6-11}
        	\frac{1}{2}\frac{d}{dt}\n\xi_w(t)\n_{\e_1}^2+\gamma\n\nabla w_t\n_{L^2}^2=(f(u_2)-f(u_1),-\Delta w_t).
        \end{equation}
       Using the Sobolev embedding $H^2\hookrightarrow C$, we estimate the term at the right-hand side of (\ref{3-6-11}) as follows:
        \begin{align*}
        	&|(f(u_2)-f(u_1),-\Delta w_t)|=\left|\int_{0}^{1}(f'(u_2-\theta w)w,-\Delta w_t)d\theta\right|\\
        	&\leq\left|\int_{0}^{1}(f''(u_2-\theta w)(\nabla u_2-\theta\nabla w)w,\nabla w_t)d\theta\right|
        	+\left|\int_{0}^{1}(f'(u_2-\theta w)\nabla w,\nabla w_t)d\theta\right|\\
        	&\leq C((1+|u_1|^3+|u_2|^3)(|\nabla u_1|+|\nabla u_2|)|w|,|\nabla w_t|)
        	+C((1+|u_1|^4+|u_2|^4)|\nabla w|,|\nabla w_t|)\\
        	&\leq C[1+(\n u_1\n^3_{H^2}+\n u_2\n^3_{H^2})\n(|\nabla u_1|+|\nabla u_2|)\n_{L^2}+\n u_1\n^4_{H^2}+\n u_2\n^4_{H^2}]\cdot
        	\n \Delta w\n_{L^2}\n \nabla w_t\n_{L^2}\\
        	&\leq C_{r,T}\n\xi_w\n_{\e_1}^2.
        \end{align*}
        Inserting this estimate into equality (\ref{3-6-11}) and applying the Gronwall inequality, we get
        \[\n\xi_w(t)\n_{\e_1}^2\leq e^{C_{r,T}T}\n\xi_w(0)\n_{\e_1}^2,\quad \forall t\in[0,T].\]
        Hence the estimate (\ref{3-6-0}) holds and the proof is completed.
	\end{proof}
	Theorem (\ref{well}) implies that the Shatah-Struwe solution semigroup $S(t)$, when  restricted on $\e_1$, is a locally Lipschitz continuous semigroup on $\e_1$.

\section{Dissipation}
	In this section, we are intended to obtain the dissipativity of $(\e_1,S(t))$. Since the $\e-$norm of $\xi_u$ is already dissipative, due to the first inequality in estimate (\ref{3-6-5}), we just need to verify that the $\e-$norm of $\xi_v$ is also dissipative. The main difficulty is to deal with the term $(f'(u)v,v_t)$. In \cite{Liu1}, the authors concern the point dissipativity instead of the uniform dissipativity in the sub-quintic case. However, we will show that the uniform dissipativity can actually be obtained even in the quintic case, by taking full advantage of the fact that the dynamical system $(\e,S(t))$ possesses a compact global attractor.\\
	\\
	\emph{Sub-quintic case.} We firstly consider the sub-quintic case where the nonlinearity $f$ satisfies
	\begin{equation}\label{4.1}
		|f''(s)|\leq C(1+|u|^{3-\kappa})
	\end{equation}
	for some $0<\kappa\leq3$.
	\begin{thm}\label{thm4.1}
		Let $f$ satisfies conditions (\ref{4.1}), (\ref{1.3}) and (\ref{1.4}), then the dynamical system $(\e_1,S(t))$ is dissipative.
	\end{thm}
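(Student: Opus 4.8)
The plan is to reduce the claim to an eventual uniform bound on $\n\xi_v(t)\n_\e$ with $v:=u_t$, and then to close a differential inequality of the form treated by Lemma \ref{Gronwall}, in which the only genuinely dangerous coefficient is the Strichartz quantity $\n u\n_{L^{12}}^{4-\kappa}$, tamed with the help of the compact attractor $\mathcal{A}_0$ from Lemma \ref{lemma3.3}(ii). Indeed, by the first inequality of (\ref{3-6-5}) together with the known $\e$-dissipativity $\n\xi_u(t)\n_\e\le R$ for $t\ge t_1$ (Lemma \ref{lemma3.3}(i)) and the eventual boundedness of the windowed Strichartz norm coming from (\ref{3-3}), it suffices to prove that $\n\xi_v(t)\n_\e$ enters a fixed ball of $\e$. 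I would therefore work with the differentiated equation (\ref{eq3.1}) and its energy identity (\ref{3-6-2}).

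Next I would introduce the perturbed functional $\Phi(t):=\tfrac12\n\xi_v(t)\n_\e^2+\epsilon(v,v_t)$, with $\epsilon\in(0,\gamma/2)$ small, so that $\Phi$ is equivalent to $\n\xi_v\n_\e^2$ and the multiplier $v_t+\epsilon v$ produces, besides the damping $\gamma\n v_t\n_{L^2}^2$, a gradient dissipation $\epsilon\n\nabla v\n_{L^2}^2$; this yields a coercive dissipation $\ge 2\alpha\Phi$ with $\alpha\sim\epsilon$. Differentiating and using (\ref{eq3.1}) leaves on the right-hand side the cross term $-\epsilon\gamma(v,v_t)$ (absorbed by the dissipation for $\epsilon$ small) and the two nonlinear terms $-(f'(u)v,v_t)$ and $-\epsilon(f'(u)v,v)$. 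The decisive observation is that $\n v\n_{L^2}=\n u_t\n_{L^2}\le R$ is already controlled by the $\e$-dissipativity, so the lower-order pieces of these terms are harmless: integrating (\ref{4.1}) gives $|f'(s)|\le C(1+|s|^{4-\kappa})$, whence the constant-coefficient part of $(f'(u)v,v_t)$ is bounded by $C\n u_t\n_{L^2}\n v_t\n_{L^2}\le\tfrac{\gamma}{4}\n v_t\n_{L^2}^2+CR^2$, and by (\ref{1.4}) the indefinite term obeys $-\epsilon(f'(u)v,v)\le\epsilon K\n u_t\n_{L^2}^2\le\epsilon KR^2$; both contribute only a constant $k=k(R,K,\gamma)$. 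The remaining top-order part is estimated by Hölder and $H^1_0\hookrightarrow L^6$ by $C\n u\n_{L^{12}}^{4-\kappa}\n\xi_v\n_\e^2$, using $3(4-\kappa)\le12$. Altogether this gives
\[
\frac{d}{dt}\Phi(t)+2\alpha\Phi(t)\le h(t)\Phi(t)+k,\qquad h(t):=C\n u\n_{L^{12}}^{4-\kappa}.
\]

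It then remains to apply Lemma \ref{Gronwall}, which requires $\int_s^t h(\tau)\,d\tau\le\alpha(t-s)+m$. Because $\kappa>0$, Hölder in time gives $\int_I\n u\n_{L^{12}}^{4-\kappa}\,d\tau\le|I|^{\kappa/4}\n u\n_{L^4(I;L^{12})}^{4-\kappa}$ on every interval $I$, so what is needed is a sufficiently small time-average of the windowed Strichartz norm. This is exactly the main obstacle, and the point where the compact attractor is essential: the bare estimate (\ref{3-3}) only bounds $\n u\n_{L^4(t,t+1;L^{12})}$ by $Q(\n g\n_{L^2})$, which need not be small, so a naive average of $h$ may exceed $\alpha$. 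I would overcome this by exploiting that, for any bounded $B\subset\e_1$, the orbit $\{\xi_u(t):t\ge t_1\}$ lies for large $t$ in a small $\e$-neighbourhood of $\mathcal{A}_0$; since $\mathcal{A}_0$ is compact, such a neighbourhood is precompact in $\e$, so its closure $\hat K$ is a compact set to which Lemma \ref{coro2.4} applies. Restarting at any $s\ge t_1$ and splitting $u=\ell+w$ as in Proposition \ref{fact} (with $\ell$ solving the linear problem and $w$ the nonlinear remainder controlled by Lemma \ref{Savostianov}), Lemma \ref{coro2.4} makes $\n u\n_{L^4(s,s+T;L^{12})}$ as small as desired on a suitable window length $T$, uniformly over $B$; the subcritical gain $|I|^{\kappa/4}$ (available precisely because $\kappa>0$) is then used to push the time-average of $h$ below $\alpha$ for all large times, the finitely many initial windows merely enlarging $m$. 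Lemma \ref{Gronwall} now yields $\Phi(t)\le\Phi(0)e^{m}e^{-\alpha t}+ke^{m}/\alpha$, hence an eventual uniform bound on $\n\xi_v(t)\n_\e$; combined with (\ref{3-6-5}) this gives a bounded absorbing set in $\e_1$, i.e. the dissipativity of $(\e_1,S(t))$.

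The hard part is this last step: all the energy computations above are routine, and the whole difficulty lies in controlling $\int_s^t h$, since the Strichartz coefficient is a priori only bounded, not small. The gain $|I|^{\kappa/4}$, usable only for $\kappa>0$, is what lets the short-window smallness near the compact attractor do its job; in the genuinely quintic case $\kappa=0$ this slack disappears, which is why that case has to be handled separately with the additional structural conditions (\ref{1.5})--(\ref{1.7}).
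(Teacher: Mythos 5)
Your overall architecture (differentiate the equation, perturbed energy functional for $\xi_v=(v,v_t)$ with $v=u_t$, Lemma \ref{Gronwall}) matches the paper's, and your lower-order estimates are fine. But the step you yourself single out as the crux contains a genuine error: a $\delta$-neighbourhood of the compact set $\mathcal{A}_0$ in the infinite-dimensional space $\e$ is \emph{not} precompact (it contains $\delta$-balls of $\e$), so its closure $\hat K$ is not a compact set and Lemma \ref{coro2.4} cannot be applied to it. To repair that scheme one must either approximate each datum of $\mathcal{B}_\delta$ by a datum on $\mathcal{A}_0$ and control the difference of the two linear evolutions through the Strichartz estimate (\ref{Stri}), choosing $\delta\le\varepsilon/(2C_T)$ \emph{after} $T$ --- this is precisely what the paper does in Lemma \ref{lemma4.2}, but for the quintic case --- or replace the neighbourhood by a genuinely compact set such as the closure of the forward orbit of $B$ (Lemma \ref{lemma4.3}). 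Even after such a repair your bookkeeping is delicate: Lemma \ref{coro2.4} gives no rate for how the window length $T$ shrinks as the prescribed smallness $\eta$ of $\n u\n_{L^4(s,s+T;L^{12})}$ decreases, so the per-unit-time average of your $h$, of order $\eta^{4-\kappa}T^{\kappa/4-1}$, is not obviously pushed below $\alpha$.

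More importantly, you have put the required smallness in the wrong place, and thereby missed the much shorter argument the sub-quintic hypothesis is designed for. Condition (\ref{4.1}) gives $|f'(s)|\le C(1+|s|^{4-\kappa})$, hence by Young's inequality $|f'(s)|\le\varepsilon|s|^4+C_\varepsilon$ for every $\varepsilon>0$. Splitting $(f'(u)v,v_t)$ accordingly, the $C_\varepsilon$-part pairs with $|v|=|u_t|$, whose $L^2$-norm is already uniformly bounded by the $\e$-dissipativity for $t\ge t_1$, so it contributes only to the constant $k$ in Lemma \ref{Gronwall}; the $\varepsilon|u|^4$-part produces the coefficient $h(t)=\varepsilon\n u\n_{L^{12}}^4$, whose unit-window integral is at most $\varepsilon C_{\mathcal{B}_0}$ by the dissipative Strichartz estimate (\ref{3-3}). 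A merely \emph{bounded} windowed Strichartz norm therefore suffices, because the prefactor $\varepsilon$ is at your disposal: fixing $\varepsilon<\alpha/(2C_{\mathcal{B}_0})$ closes the Gronwall inequality with no reference to the compactness of $\mathcal{A}_0$, to Lemma \ref{coro2.4}, or to any restarting/decomposition argument. Those tools are exactly the quintic-case machinery of Theorem \ref{dissipation}; the content of Theorem \ref{thm4.1} is that for $\kappa>0$ they are unnecessary.
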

	\begin{proof}
		Let $B$ be an arbitrary bounded set in $\e_1$. Due to Lemma \ref{lemma3.3} and Remark \ref{remark3.4}, the system $(\e,S(t))$ has a bounded absorbing set $\mathcal{B}_0$ and then there exists a time moment $t_1=t_1(B)>0$, such that $S(t)B\subset\mathcal{B}_0,\ \forall t\geq t_1.$ Moreover,
		\begin{equation}\label{4-1-1}
			\n\xi_u\n_{L^\infty(t_1,\infty;\e)}+\n u\n_{L^4(t,t+1;L^{12})}\leq C_{\mathcal{B}_0},\quad \forall t\geq t_1.
		\end{equation}
		Again, let $v(t)=u_t(t)$, then by \textbf{Step 1} of Theorem \ref{well},
		\begin{equation}\label{4-1-2}
			\n\xi_v(t_1)\n_{\e}\leq C(\n\xi_u(t_1)\n_{\e_1})\leq C_B,
		\end{equation}
		 and we just need to estimate the $\e-$norm for $\xi_v(t)$ for time $t\geq t_1.$ To this end, multiplying Eq. (\ref{eq3.1}) by $v_t+\alpha v$ with $\alpha>0$ to be determined later, after integrating over $x\in\Omega$, we get
		 \begin{equation}\label{4-1-3}
		 	\frac{d}{dt}\Phi(v)+\alpha\Phi(v)+\alpha\n v_t\n_{L^2}^2+\Gamma=-2(f'(u)v,v_t)-2\alpha(f'(u)v,v),
		 \end{equation}
		where
		\[\Phi(v):=\n\xi_v\n_\e^2+2\alpha(v_t,v),\]
		\[\Gamma=(2\gamma-4\alpha)\n v_t\n_{L^2}^2+\alpha\n\nabla v\n_{L^2}^2+(2\alpha\gamma-2\alpha^2)(v_t,v).\]
		Choosing $\alpha=\alpha(\gamma,\lambda_1)>0$ small enough, we have $\Gamma>0$ and
		\begin{equation}\label{4-1-4}
			\frac{1}{2}\n\xi_v\n_\e^2\leq\Phi(v)\leq\frac{3}{2}\n\xi_v\n_\e^2,
		\end{equation}
		From (\ref{4.1}), we can see that $\forall\varepsilon>0,\ \exists\ C_\varepsilon>0$, such that
		\begin{equation}\label{4-1-5}
			|f'(u)|\leq \varepsilon |u|^4+C_\varepsilon.
		\end{equation}
		Then the first term on the right-hand side of (\ref{4-1-3}) obeys the estimate
		\begin{align*}
			-2(f'(u)v,v_t)&\leq \varepsilon(|u|^4|v|,|v_t|)+C_\varepsilon(|v|,|v_t|)\\
			&\leq \varepsilon\n u\n_{L^{12}}^4\n v\n_{L^6}\n v_t\n_{L^2}+C_\varepsilon\n u_t\n_{L^2}\n v_t\n_{L^2}\\
			&\leq \varepsilon\n u\n_{L^{12}}^4\n\xi_v\n_{\e}+\alpha\n v_t\n_{L^2}^2+C_{\varepsilon,\alpha}\n u_t\n_{L^2}^2
		\end{align*}
		for all $\varepsilon>0$ small enough. Owing to (\ref{1.4}), the last term in (\ref{4-1-3}) obeys the estimate
		\[-2\alpha(f'(u)v,v)\leq 2\alpha K\n v\n_{L^2}^2=2\alpha K\n u_t\n_{L^2}^2.\]
		Inserting these equalities into (\ref{4-1-3}), we have 
		\[\frac{d}{dt}\Phi(v)+\alpha\Phi(v)\leq \varepsilon\n u\n_{L^{12}}^4\Phi(v)+C_{\varepsilon,\alpha}\n u_t\n_{L^2}^2.\]
		Due to (\ref{4-1-1}),
		\[\int_{s}^{t}\varepsilon\n u(\tau)\n_{L^{12}}^4d\tau\leq \varepsilon C_{\mathcal{B}_0}(t-s+1),\quad \forall t\geq s\geq t_1,\]
		\[\n u_t\n_{L^2}^2\leq \n\xi_u\n_{L^\infty(t_1,\infty;\e)}^2\leq C_{\mathcal{B}_0},\quad \forall t\geq t_1.\]
		Fixing now $\varepsilon<\frac{\alpha}{2C_{\mathcal{B}_0}}$ and applying Lemma \ref{Gronwall}, we get 
		\[\Phi(v(t))\leq \Phi(v(t_1))e^{-\frac{\alpha}{2}(t-t_1)}+C_{\mathcal{B}_0},\quad \forall t\geq t_1.\]
		Combing this estimate with (\ref{3-6-5}), (\ref{4-1-1}), (\ref{4-1-2}) and (\ref{4-1-4}), we infer that
		\[\n\xi_u(t)\n_{\e_1}\leq C_Be^{-\frac{\alpha}{4}(t-t_1)}+C{(\mathcal{B}_0,\n g\n_{L^2})},
		\quad \forall t\geq t_1.\]
		This implies that the dynamical system $(\e_1,S(t))$ is dissipative, and $\mathbb{B}:=\{\xi\in\e_1:\n\xi\n_{\e_1}\leq 1+C{(\mathcal{B}_0,\n g\n_{L^2})}\}$ is a bounded absorbing set.
	\end{proof}
~\\
	\emph{Quintic case.} Our main aim is to obtain the dissipativity in the quintic case where the nonlinearity $f$ satisfies the growth condition (\ref{1.2}). The main difficulty comes from that the estimates (\ref{3-3}) and (\ref{4-1-5}) are both invalid here, hence, we cannot construct the suitable Gronwall inequality at least in a direct way. To solve this problem, inspired by \cite{Zelik1}, we first establish the uniform boundedness of $L^4(L^{12})$-norm of the solutions and then prove that the set of trajectories restricted on the interval $[0,1]$ constitutes a compact subset on $L^4(0,1;L^{12})$. Using this compactness, we can make a decomposition $u(t)=\un(t)+\um(t)$ where the $L^4(L^{12})-$norm of $\un$ is small and the other function $\um$ is smooth.\\
	\indent For any $\delta>0$, denote the $\delta-$neighbourhood of $\mathcal{A}_0$ in $\e$ as
	\[\mathcal{B}_\delta:=\{\xi\in\e:dist_\e(\xi,\mathcal{A}_0)\leq\delta\},\]
	where $\mathcal{A}_0$ is the compact global attractor of $(\e,S(t))$, $dist_\e(\cdot,\cdot)$ denotes the Hausdorff semi-distance in $\e$:
	\[dist_\e(A,B):=\sup_{x\in A}\inf_{y\in B}\n x-y\n_{\e},\quad A,B\subset\e.\]
	Obviously, $\mathcal{B}_{\delta}$ is a bounded absorbing set of $(\e,S(t))$ for any $\delta>0$ and we have the following basic result.
	\begin{lemma}\label{lemma4.2}
		For some $\delta_0>0$ small enough, there exist a time moment $T=T(\mathcal{A}_0)>0$ and a constant $C_0=C_0(\mathcal{A}_0)>0$ such that
		\[\n u\n_{L^4(0,T;L^{12})}\leq C_0\]
		for any Shatah-Struwe solution $u(t)$ with initial data $\xi_u(0)\in\mathcal{B}_{\delta_0}$.
	\end{lemma}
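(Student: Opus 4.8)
The plan is to reduce the estimate to the tools already available by exploiting the compactness of $\mathcal{A}_0$ in $\e$. The obstruction is that the absorbing neighbourhood $\mathcal{B}_{\delta_0}$ is bounded in $\e$ but is neither compact in $\e$ nor bounded in $\e_1$, so neither Lemma \ref{coro2.4} nor Proposition \ref{fact} applies directly to it. My remedy is to peel off the attractor part of each initial datum: since $\mathcal{A}_0$ is compact in $\e$, for every $\xi_0\in\mathcal{B}_{\delta_0}$ the infimum defining $dist_\e(\xi_0,\mathcal{A}_0)$ is attained, so I may write $\xi_0=\xi_a+\xi_r$ with $\xi_a\in\mathcal{A}_0$ and $\n\xi_r\n_\e\leq\delta_0$.

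First I would split the Shatah-Struwe solution as $u=v+w$, where $v$ solves the linear problem $v_{tt}+\gamma v_t-\Delta v=g$ with $\xi_v(0)=\xi_0$ and $w$ solves $w_{tt}+\gamma w_t-\Delta w=-f(v+w)$ with $\xi_w(0)=0$, exactly as in Proposition \ref{fact}. Because the linear flow is additive, I further decompose $v=v_a+v_r$, where $v_a$ carries the forcing $g$ together with the attractor datum $\xi_a\in\mathcal{A}_0$ while $v_r$ solves the homogeneous linear equation with the small datum $\xi_r$. For $v_a$ I apply Lemma \ref{coro2.4} to the compact set $\mathcal{A}_0$: given $\varepsilon>0$ there is $T_1=T_1(\varepsilon,\mathcal{A}_0)$ with $\n v_a\n_{L^4(0,T;L^{12})}\leq\varepsilon$ for every $T\leq T_1$ and every $\xi_a\in\mathcal{A}_0$, by monotonicity of the norm in $T$. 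For $v_r$ I use the Strichartz estimate of Lemma \ref{lemma2.2} with zero forcing, giving $\n v_r\n_{L^4(0,T;L^{12})}\leq C_T\n\xi_r\n_\e\leq C_T\delta_0$. Choosing first $\varepsilon$ small, then $T\leq T_1$, and finally $\delta_0\leq\varepsilon/C_T$, I obtain $\n v\n_{L^4(0,T;L^{12})}\leq 2\varepsilon$ uniformly over $\xi_0\in\mathcal{B}_{\delta_0}$; moreover, since $\mathcal{B}_{\delta_0}$ is bounded in $\e$, Lemma \ref{lemma2.1} yields a uniform bound $\n v\n_{L^\infty(0,T;H^1_0)}\leq C$.

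Next I would estimate $w$ exactly as in Proposition \ref{fact}. Applying (\ref{esti2.4}) to $w$, the growth condition (\ref{1.2}) together with interpolation and the bound on $\n v\n_{L^\infty(0,T;H^1_0)}$ gives $\n f(v+w)\n_{L^1(0,t;L^{2})}\leq C(t+\varepsilon^4+\n w\n^5_{L^5(0,t;L^{10})})$. Setting $Y(t):=\n\xi_w(t)\n_\e+\n w\n_{L^5(0,t;L^{10})}$ this produces $Y(t)\leq C(T+\varepsilon^4)+CY(t)^5$ with $Y(0)=0$. Provided $\varepsilon$ and $T$ were fixed small enough that $C(T+\varepsilon^4)\leq\frac12(\tfrac{1}{2C})^{1/4}$, Lemma \ref{Savostianov} (with $\sigma=5$) gives $Y(t)\leq 2C(T+\varepsilon^4)$ on $[0,T]$, hence $\n w\n_{L^4(0,T;L^{12})}\leq C$. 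Adding the two contributions yields $\n u\n_{L^4(0,T;L^{12})}\leq 2\varepsilon+C=:C_0$, with $T$, $C_0$ and $\delta_0$ depending only on $\mathcal{A}_0$ (and $\n g\n_{L^2}$).

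The only delicate point is the ordering of the constants and the compatibility of the two smallness requirements on $T$. This is harmless: $\n v_a\n_{L^4(0,T;L^{12})}$ is nondecreasing in $T$, so once $T_1$ is furnished by Lemma \ref{coro2.4} I am free to shrink $T$ further to meet the threshold of Lemma \ref{Savostianov}, and a small $\varepsilon$ already forces a small admissible $T$, so the two constraints never conflict. The genuinely new ingredient compared with Proposition \ref{fact}, and the crux of the argument, is that the missing $\e_1$-boundedness (equivalently, $\e$-compactness) of $\mathcal{B}_{\delta_0}$ is replaced by the $\e$-compactness of the attractor through the splitting $\xi_0=\xi_a+\xi_r$.
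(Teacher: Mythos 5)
Your proof is correct and follows essentially the same route as the paper: the paper also splits $u=v+w$ as in Proposition \ref{fact}, approximates the initial datum by a point of the compact attractor $\mathcal{A}_0$ (writing $v=\tilde v+(v-\tilde v)$ with $v-\tilde v$ solving the homogeneous linear equation, which is exactly your $v_a+v_r$ decomposition), controls the attractor part via Lemma \ref{coro2.4} and the remainder via the Strichartz estimate (\ref{Stri}) with $\delta_0\leq\varepsilon/(2C_T)$, and then closes the argument for $w$ with Lemma \ref{Savostianov} precisely as you do.
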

	\begin{proof}
		The proof is almost the same as the one of Proposition \ref{fact}. We again split the solution $u=v+w$ where $v,w$ solve the following problems respectively
		\[v_{tt}+\gamma v_t-\Delta v=g,\quad \xi_v(0)=\xi_u(0);\]
		\[w_{tt}+\gamma w_t-\Delta w=-f(v+w),\quad \xi_w(0)=0.\]
		For variable $v$, since $\mathcal{A}_0$ is compact, due to Lemma \ref{coro2.4}, for any $\varepsilon>0$, there exists $T=T(\varepsilon,\mathcal{A}_0)>0$, such that
		\begin{equation}\label{4-2-1}
			\n \xi_v(t)\n_\e\leq C,\quad \n v\n_{L^4(0,t;L^{12})}\leq\frac{\varepsilon}{2},\quad \forall t\leq T,\ \forall\xi_v(0)\in\mathcal{A}_0.
		\end{equation}
		\indent Let $\delta\leq \frac{\varepsilon}{2C_T}$, where $C_T$ denotes the constant in the Strichartz estimate (\ref{Stri}), then for any $\xi_v(0)\in\mathcal{B}_\delta$, there exists $\xi_{\tilde{v}}(0)\in \mathcal{A}_0$, such that
		\[\n\xi_v(0)-\xi_{\tilde{v}}(0)\n_{\e}\leq\frac{\varepsilon}{2C_T}.\]
		The function $v-\tilde{v}$ solves
		\[(v-\tilde{v})_{tt}+\gamma(v-\tilde{v})_t-\Delta(v-\tilde{v})=0.\]
		By Strichartz estimate (\ref{Stri}),
		\[\n v-\tilde{v}\n_{L^4(0,t;L^{12})}\leq C_T\n\xi_v(0)-\xi_{\tilde{v}}(0)\n_{\e}\leq \frac{\varepsilon}{2},\quad \forall t\leq T.\]
		Combining with (\ref{4-2-1}), we have
		\[\n \xi_v(t)\n_\e\leq C,\quad \n v\n_{L^4(0,t;L^{12})}\leq\varepsilon,\quad \forall t\leq T,\ \forall\xi_v(0)\in\mathcal{B}_\delta.\]
		Arguing exactly as in the rest proof of Proposition \ref{fact}, we get that, if $\varepsilon$ and $T$ are chosen small enough, then
		\[\n u\n_{L^4(0,T;L^{12})}\leq C_{\varepsilon,T},\quad\forall\xi_u(0)\in\mathcal{B}_\delta.\]
		So, we can fix $\varepsilon_0$ and $T_0=T_0(\varepsilon_0,\mathcal{A}_0)$ being small to satisfy the above inequality, and take $\delta_0=\frac{\varepsilon_0}{2C_{T_0}}.$ The proof is complete.
	\end{proof}
	Now, let $B$ be an arbitrary bounded set in $\e_1$. Then there exists $t_1=t_1(B)>0$, such that
	\[S(t)B\subset\mathcal{B}_{\delta_0},\quad \forall t\geq t_1.\]
	Define 
	\[\bw:=\left[\bigcup_{s\geq t_1}S(s)B\right]_\e,\]
	where $[A]_\e$ denotes the closure of the set $A$ in $\e.$ Obviously, $\bw\subset\mathcal{B}_{\delta_0}$ and we have the following results.
	\begin{lemma}\label{lemma4.3}
		$\bw$ is a positively invariant set of $S(t)$, and a compact set in $\e.$
	\end{lemma}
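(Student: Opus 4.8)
The plan is to prove the two assertions separately, relying only on the well-posedness of the Shatah-Struwe semigroup on $\e$ (Lemma \ref{lemma3.3}), the compact embedding $\e_1\hookrightarrow\hookrightarrow\e$, and the attraction property of the global attractor $\mathcal{A}_0$.

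For \emph{positive invariance} I would argue first on the generating set $\bigcup_{s\geq t_1}S(s)B$ and then pass to the closure. If $y=S(s)x$ with $x\in B$ and $s\geq t_1$, the semigroup property gives $S(t)y=S(t+s)x$ for every $t\geq0$, and since $t+s\geq t_1$ this point again belongs to $\bigcup_{s\geq t_1}S(s)B$; hence $S(t)$ maps the generating set into itself. Because each map $S(t):\e\to\e$ is continuous, a general $y\in\bw$ written as $y=\lim_n y_n$ with $y_n$ in the generating set satisfies $S(t)y=\lim_n S(t)y_n\in\bw$, so $S(t)\bw\subset\bw$ for all $t\geq0$.

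For \emph{compactness}, since $\bw$ is closed in the complete space $\e$ it suffices to show the generating set is precompact in $\e$, which I would do by extracting a convergent subsequence from an arbitrary sequence $y_n=S(s_n)x_n$, $x_n\in B$, $s_n\geq t_1$, splitting into two cases. If $\{s_n\}$ has a bounded subsequence, say $s_n\to s^*<\infty$, then the boundedness of $B$ in $\e_1$ with $\e_1\hookrightarrow\hookrightarrow\e$ makes $\{x_n\}$ precompact in $\e$, so $x_n\to x^*$ in $\e$ along a further subsequence; the joint continuity of $(s,x)\mapsto S(s)x$ then forces $y_n\to S(s^*)x^*$, a point lying in $\bw$ automatically by closedness. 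If instead $s_n\to\infty$, then, $B$ being bounded in $\e$, the attraction property of $\mathcal{A}_0$ gives $dist_\e(S(s_n)x_n,\mathcal{A}_0)\to0$; picking $a_n\in\mathcal{A}_0$ with $\n y_n-a_n\n_\e\to0$ and using compactness of $\mathcal{A}_0$ to pass to $a_n\to a$, I get $y_n\to a$. In either case a convergent subsequence exists, so the generating set is precompact and $\bw$ is compact.

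The routine ingredients (the semigroup identity, the compact embedding, the attraction) cause no difficulty; the one step requiring care is the \emph{joint} continuity of the solution map in the first case, where both $s_n$ and $x_n$ move at once. I would handle it through the splitting
\[
\n S(s_n)x_n-S(s^*)x^*\n_\e\leq\n S(s_n)x_n-S(s_n)x^*\n_\e+\n S(s_n)x^*-S(s^*)x^*\n_\e,
\]
controlling the second term by continuity of the trajectory $s\mapsto S(s)x^*$ in $C([0,\infty);\e)$ and the first term by the continuous dependence estimate for Shatah-Struwe solutions, which is Lipschitz in the $\e$-norm uniformly for $s_n$ in the bounded range and for data in a fixed $\e$-ball (the relevant $L^4(L^{12})$ bounds being uniform on bounded sets, as in the $\e$-analogue of Proposition \ref{fact}). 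This uniformity is precisely what makes both terms vanish and is the crux of the argument.
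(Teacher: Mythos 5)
Your treatment of positive invariance and of the case $s_n\to\infty$ coincides with the paper's (the paper simply asserts invariance "by definition" and continuity of $S(t)$; your closure argument is the spelled-out version, and the attraction-to-$\mathcal{A}_0$ step is identical). Where you genuinely diverge is the bounded-time case. The paper's argument there is much shorter: for $s_n\in[t_1,t_*]$ and $x_n\in B$ bounded in $\e_1$, the uniform estimate from \textbf{Step 1} of Theorem \ref{well} gives $\sup_n\n y_n\n_{\e_1}\leq C_{B,t_*}$ directly for the \emph{images} $y_n=S(s_n)x_n$, and the compact embedding $\e_1\hookrightarrow\hookrightarrow\e$ finishes immediately --- no continuity of the flow in the initial data is needed at all. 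Your route instead extracts $x_n\to x^*$ in $\e$ and invokes joint continuity of $(s,x)\mapsto S(s)x$, which forces you to establish a Lipschitz continuous-dependence estimate in $\e$ uniformly over the data. That can be done, but your stated justification for it is off: there is no "$\e$-analogue of Proposition \ref{fact}" in the quintic case --- the uniform $L^4(L^{12})$ bound over bounded sets of $\e$ is precisely what is \emph{not} available (Lemma \ref{lemma3.3} only gives $\n u\n_{L^4(0,T;L^{12})}\leq Q(\xi_0,T)$, non-uniformly in $\xi_0$), and this is why the paper proves Proposition \ref{fact} for bounded sets of $\e_1$ and Lemma \ref{lemma4.2} for neighbourhoods of the attractor. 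To repair your step you should instead use that $x_n\in B$ is bounded in $\e_1$, so Proposition \ref{fact} together with the propagation of $\e_1$-bounds in Theorem \ref{well} yields uniform Strichartz control on $[0,t_*]$, after which the Gronwall computation as in the proof of Lemma \ref{lemma4.4} gives the needed Lipschitz dependence in $\e$ (the single limiting trajectory $S(\cdot)x^*$ only needs its own, non-uniform, Strichartz bound). In short: your argument is correct once the source of the uniform $L^4(L^{12})$ bound is corrected, but the paper's direct use of $\e_1$-boundedness of the $y_n$ themselves bypasses the entire "crux" you identify.
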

	\begin{proof}
		Since $S(t)$ is a continuous semigroup on $\e$, $\bw$ is obviously positively invariant by definition. So, it suffices to prove that for any sequence $\{y_n\}_{n=1}^\infty\subset\bigcup_{s\geq t_1}S(s)B$, there exists a subsequence $\{y_{n_k}\}_{k=1}^\infty$ which converges in $\e.$\\
		\indent Case 1. $\exists t_*>t_1$, such that 
		\[\{y_n\}_{n=1}^\infty\subset\bigcup_{t_1\leq s\leq t_*}S(s)B.\]
		Due to the \textbf{Step 1} of Theorem \ref{well}, we have 
		\[\sup_{n\geq1}\n y_n\n_{\e_1}\leq\sup_{t_1\leq s\leq t_*}\sup_{\xi_0\in B}\n S(s)\xi_0\n_{\e_1}\leq C_{{B},t_*},\]
		i.e. $\{y_n\}_{n=1}^\infty$ is a bounded set in $\e_1$ and thus there is a convergent subsequence in $\e.$\\
		\indent Case 2. $\exists\ t_n\rightarrow\infty$ and $x_n\in B$, such that $y_n=S(t_n)x_n$. Then,  $\exists\ t_{n_k}\rightarrow\infty$, such that
		\[y_{n_k}=S(t_{n_k})x_{x_k}\rightarrow y_0\in\mathcal{A}_0\quad in\ \e.\]
		\indent The proof is complete.
	\end{proof}
	\begin{lemma}\label{lemma4.4}
		Let\[\mathcal{K}:=\{u(t)|_{t\in\mathbb{R}^+}:u(t)\ is\ the\ Shatah-Struwe\ solution\ with\ initial\ data\ \xi_u(0)\in\bw\}.\]
		Then $\mathcal{K}|_{t\in[0,1]}$ is a compact subset of $L^4(0,1;L^{12})$, i.e.
		\[\mathcal{K}|_{t\in[0,1]}\subset\subset L^4(0,1;L^{12}).\]
		Moreover, $\forall u\in\mathcal{K},$
		\begin{equation}\label{4-4-1}
			\sup_{t\geq0}\n u\n_{L^4(t,t+1;L^{12})}\leq C_1,
		\end{equation}
		where the constant $C_1$ depends only on $\mathcal{A}_0$, but independs on $\bw.$ 
	\end{lemma}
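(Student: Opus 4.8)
The statement has two independent parts, and I would treat them in the stated order.

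\emph{The uniform bound (\ref{4-4-1}).} Here I would exploit that $\bw$ is positively invariant together with the uniform local Strichartz estimate of Lemma \ref{lemma4.2}. For any $u\in\mathcal K$ and any $t\geq0$, the time-shifted function $s\mapsto u(t+s)$ is the Shatah-Struwe solution issuing from $\xi_u(t)$, and by positive invariance $\xi_u(t)\in S(t)\bw\subset\bw\subset\mathcal B_{\delta_0}$. Lemma \ref{lemma4.2} therefore yields
\[\n u\n_{L^4(t,t+T;L^{12})}\leq C_0,\]
with $T$ and $C_0$ depending only on $\mathcal A_0$. Covering $[t,t+1]$ by finitely many intervals of length $T$ and summing the fourth powers of the Strichartz norms gives (\ref{4-4-1}) with a constant $C_1$ built from $C_0$ and $\lceil1/T\rceil$; in particular $C_1$ depends only on $\mathcal A_0$ and not on $\bw$.

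\emph{Strategy for the compactness.} I would prove that the solution operator $\Sigma:\xi_0\mapsto u|_{[0,1]}$ is Lipschitz continuous from $\bw$, endowed with the $\e$-topology, into $L^4(0,1;L^{12})$. Since $\bw$ is compact in $\e$ by Lemma \ref{lemma4.3}, and $\mathcal K|_{[0,1]}=\Sigma(\bw)$ by the uniqueness of Shatah-Struwe solutions, the continuous image $\mathcal K|_{[0,1]}$ is then automatically compact in $L^4(0,1;L^{12})$, which is the conclusion.

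\emph{The local Lipschitz estimate.} This is the core step. Given two solutions $u_1,u_2$ with data in $\bw$, set $z=u_1-u_2$; it solves (\ref{eq2.1}) with forcing $G=-(f(u_1)-f(u_2))$ and datum $\xi_z(a)$ on a subinterval $[a,b]$. I would run the energy estimate of Lemma \ref{lemma2.1} and the Strichartz estimate (\ref{Stri}) in parallel, abbreviating $E=\n\xi_z\n_{L^\infty(a,b;\e)}$ and $S=\n z\n_{L^4(a,b;L^{12})}$. Using $|f(u_1)-f(u_2)|\leq C(1+|u_1|^4+|u_2|^4)|z|$, Hölder's inequality and $H^1_0\hookrightarrow L^6$, the forcing obeys
\[\n G\n_{L^1(a,b;L^2)}\leq C\,E\,\Lambda(a,b),\qquad \Lambda(a,b):=(b-a)+\n u_1\n^4_{L^4(a,b;L^{12})}+\n u_2\n^4_{L^4(a,b;L^{12})},\]
whence $E+S\leq C\n\xi_z(a)\n_\e+C\Lambda(a,b)\,E$. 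The decisive point is that $\Lambda(a,b)$ can be made uniformly small over all data in $\mathcal B_{\delta_0}$ once $b-a$ is small, exactly by the splitting $u=v+w$ and Lemma \ref{coro2.4} used in Lemma \ref{lemma4.2}. Fixing such a length $\tau$ so that $C\Lambda\leq\frac12$ lets me absorb the last term and obtain
\[\n\xi_z\n_{L^\infty(a,a+\tau;\e)}+\n z\n_{L^4(a,a+\tau;L^{12})}\leq 2C\n\xi_z(a)\n_\e.\]
Because $\bw$ is positively invariant, $\xi_{u_i}(a)\in\bw\subset\mathcal B_{\delta_0}$ at every node $a=j\tau$, so the same $\tau$ is admissible on each of the $\lceil1/\tau\rceil$ consecutive blocks covering $[0,1]$; chaining the estimates yields $\n z\n_{L^4(0,1;L^{12})}\leq L\n\xi_z(0)\n_\e$ with $L$ uniform, i.e. the required Lipschitz property of $\Sigma$.

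\emph{Main obstacle.} The difficulty is the criticality of the quintic term in the $L^4(L^{12})$ norm: the factor $(1+|u|^4)z$ cannot be estimated by any norm of $z$ strictly below $L^6$, so a naive Gronwall argument returns the Strichartz norm of $z$ on the right with an a priori order-one coefficient and does not close. This is precisely why the uniform smallness of $\Lambda$ on short intervals — resting on the Strichartz smallness of Lemma \ref{coro2.4} for the linear flow on compact sets of data and on the ODE Lemma \ref{Savostianov} for the nonlinear remainder — is indispensable, and why the positive invariance of $\bw$ (Lemma \ref{lemma4.3}) must be invoked to keep the constants uniform across the finitely many blocks.
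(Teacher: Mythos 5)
Your proof is correct and follows the paper's overall strategy: the uniform bound (\ref{4-4-1}) is obtained from the positive invariance of $\bw$, Lemma \ref{lemma4.2} and a covering of $[t,t+1]$ by length-$T$ blocks, and the compactness of $\mathcal{K}|_{t\in[0,1]}$ is obtained by exhibiting it as the continuous image of the compact set $\bw$ (Lemma \ref{lemma4.3}) under the solution map into $L^4(0,1;L^{12})$. The one place where you diverge is the continuity step, and there your diagnosis of the ``main obstacle'' is off. Since $z=u_1-u_2$ enters the forcing only linearly, $\n f(u_1)-f(u_2)\n_{L^2}\leq C(1+\n u_1\n_{L^{12}}^4+\n u_2\n_{L^{12}}^4)\n z\n_{L^6}$ is controlled by the \emph{energy} norm of $z$ times a coefficient that is merely $L^1$ in time, with integral over $[0,1]$ bounded by $C(1+C_1^4)$ thanks to (\ref{4-4-1}); the quartic weights fall on the individual solutions, not on $z$. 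Hence the naive Gronwall argument on the energy identity for $z$ \emph{does} close --- this is exactly what the paper does, yielding $\n\xi_z(t)\n_\e\leq Ce^{C_1t}\n\xi_z(0)\n_\e$ on $[0,1]$ --- after which a single application of the Strichartz estimate (\ref{Stri}) on all of $[0,1]$ with $G=f(u_2)-f(u_1)$ gives $\n z\n_{L^4(0,1;L^{12})}\leq C\n\xi_z(0)\n_\e$ with no absorption and no smallness of your $\Lambda$. Your short-interval absorption-and-chaining scheme is nevertheless valid: uniform smallness of $\Lambda(a,a+\tau)$ over data in $\mathcal{B}_{\delta_0}$ does follow from rerunning the $u=v+w$ splitting in the proof of Lemma \ref{lemma4.2} (which produces smallness, not just boundedness, of $\n u\n_{L^4(0,\tau;L^{12})}$ for $\tau$ small), and positive invariance lets you restart at each node. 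It is simply heavier than necessary: the smallness mechanism is genuinely indispensable for the a priori Strichartz bound on the solutions themselves (Proposition \ref{fact}, Lemma \ref{lemma4.2}), where the quintic power falls on the unknown, but not for the difference of two solutions whose $L^4(L^{12})$ norms are already under control.
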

    \begin{proof}
    	Firstly, we show that $\mathcal{K}$ is positively invariant under the translation: $T_h\mathcal{K}\subset\mathcal{K},\ \forall h\geq0$, where $(T_hu)(t):=u(t+h)$.\\
    	\indent Indeed, $\forall v\in T_h\mathcal{K}$, there exists $u\in\mathcal{K}$ such that 
    	\[v(t)=(T_hu)(t)=u(t+h).\]
    	Since $u$ is a Shatah-Struwe solution and $\bw$ is positively invariant under $S(t)$, we have $v$ is also a Shatah-Struwe solution and $\xi_v(0)=\xi_u(h)=S(h)\xi_u(0)\in \bw.$ This shows that $v\in\mathcal{K}.$\\
    	\indent Secondly, since $\bw\subset\mathcal{B}_{\delta_0}$, by Lemma \ref{lemma4.2}, there is $T=T(\mathcal{A}_0)>0$ such that 
    	\[\n u\n_{L^4(0,T;L^{12})}\leq C_0,\quad \forall u\in\mathcal{K}.\]
    	Then, since $T_h\mathcal{K}\subset\mathcal{K}$, we can deduce that
    	\begin{equation}\label{4-4-2}
    		\sup_{t\geq0}\n u\n_{L^4(t,t+1;L^{12})}\leq\frac{C_0}{\min\{T,1\}}=: C_1,\quad \forall u\in \mathcal{K},
    	\end{equation}
    	where the constant $C_1$ depends only on $\mathcal{A}_0$.\\
    	\indent Now, define a map $S:\bw\rightarrow L^4(0,1;L^{12})$, via $S:\xi_u(0)\longmapsto u|_{t\in[0,1]}$. We want to verify the continuity of $S$. 
    	Let $u_1(t),\ u_2(t)\in\mathcal{K}|_{t\in[0,1]}$ with initial data $
    	\xi_{u_1}(0),\ \xi_{u_2}(0)$ respectively and let $w(t)=u_1(t)-u_2(t)$. Then $w$ solves
    	\[w_{tt}+\gamma w-\Delta w=-f(u_1)+f(u_2).\]
    	Mulitiplying this equation by $w_t$ and integrating over $x\in\Omega$, 
    	\[\frac{1}{2}\frac{d}{dt}\n\xi_w(t)\n_{\e}^2+\gamma\n w_t\n_{L^2}^2=-(f(u_1)-f(u_2),w_t).\]
    	We estimate the last term as follows:
    	\begin{align*}
    		|(f(u_1)-f(u_2),w_t)|&\leq C((1+|u_1|^4+|u_2|^4)|w|,|w_t|)\\
    		&\leq C(1+\n u_1\n_{L^{12}}^4+\n u_2\n_{L^{12}}^4)\n w\n_{L^6}\n w_t\n_{L^2}\\
    		&\leq C(1+\n u_1\n_{L^{12}}^4+\n u_2\n_{L^{12}}^4)\n\xi_w\n_{\e}^2.
    	\end{align*}
    	Applying the Gonwall inequality and use the estimate (\ref{4-4-2}), we end up with
    	\[\n\xi_{u_1}(t)-\xi_{u_2}(t)\n_{\e}\leq Ce^{C_1t}\n\xi_{u_1}(0)-\xi_{u_2}(0)\n_{\e},\quad \forall t\in[0,1].\]
    	Then,
    	\begin{align*}
    		&\n f(u_1)-f(u_2)\n_{L^1(0,1;L^{2})}\\
    		&\qquad\leq C\int_{0}^{1}(1+\n u_1\n_{L^{12}}^4+\n u_2\n_{L^{12}}^4)\n u_1-u_2\n_{L^6}dt\\
    		&\qquad\leq C(1+||u_1||^4_{L^4(0,1;L^{12})}+||u_2||^4_{L^4(0,1;L^{12})})||\xi_{u_1}-\xi_{u_2}||_{L^\infty(0,1;\e)}\\
    		&\qquad \leq C\n\xi_{u_1}(0)-\xi_{u_2}(0)\n_{\e}.
    	\end{align*}
        Applying the Strichartz estimate (\ref{Stri}), we get
        \begin{align*}
        	\n u_1-u_2\n_{L^4(0,1;L^{12})}&\leq C\n\xi_{u_1}(0)-\xi_{u_2}(0)\n_\e+C\n f(u_1)-f(u_2)\n_{L^1(0,1;L^{2})}\\
        	&\leq C\n\xi_{u_1}(0)-\xi_{u_2}(0)\n_\e.
        \end{align*}
        Thus, the map $S:\e\rightarrow L^4(0,1;L^{12})$ is continuous on $\bw$. Since $\bw$ is compact in $\e$, $
        \mathcal{K}|_{t\in[0,1]}=S(\bw)$ is compact in $L^4(0,1;L^{12})$.
    \end{proof}

	\begin{lemma}\label{lemma4.5}
		For any $\varepsilon>0$ and any $u\in\mathcal{K}$, we can make a decomposition $u(t)=\un(t)+\um(t)$, such that
		\begin{equation}
			\sup_{t\geq0}\n \un \n_{L^4(t,t+1;L^{12})}\leq\varepsilon
		\end{equation}
		and 
		\[\n \um\n_{L^\infty(\mathbb{R}^+;H^2)}\leq C_\varepsilon,\]
		where the constant $C_\varepsilon$ depends on $\varepsilon$ and $C_1$ in (\ref{4-4-1}), but independs on $\bw$.
	\end{lemma}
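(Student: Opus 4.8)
The plan is to build the splitting first on the unit interval $[0,1]$, using the compactness furnished by Lemma \ref{lemma4.4} together with the density of smooth functions in $L^4(0,1;L^{12})$, and then to propagate it to all of $\mathbb{R}^+$ via the translation invariance $T_h\mathcal{K}\subset\mathcal{K}$ established in the proof of Lemma \ref{lemma4.4}. Since in dimension three $\hh\hookrightarrow L^\infty\hookrightarrow L^{12}$, every $\varphi\in C_c^\infty((0,1)\times\Omega)$ lies in $L^\infty(0,1;\hh)$, and such functions are dense in $L^4(0,1;L^{12})$. Because $\mathcal{K}|_{t\in[0,1]}$ is compact in $L^4(0,1;L^{12})$, for the prescribed $\varepsilon$ I would first fix a finite net $\{g_1,\dots,g_m\}\subset L^\infty(0,1;\hh)$ with the property that for every $u\in\mathcal{K}$ there is an index $i=i(u)$ with $\n u-g_i\n_{L^4(0,1;L^{12})}\leq 2^{-1/4}\varepsilon$.

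Next I would globalize in time. For each integer $n\geq0$ the translate $T_nu$ again belongs to $\mathcal{K}$, so $(T_nu)|_{[0,1]}$ is covered by the same net; picking the nearest profile $g_{i(n)}$ I set $\um(t):=g_{i(n)}(t-n)$ and $\un(t):=u(t)-\um(t)$ for $t\in[n,n+1)$. As only the finitely many profiles $g_1,\dots,g_m$ ever occur,
\[\n\um\n_{L^\infty(\mathbb{R}^+;H^2)}\leq\max_{1\leq i\leq m}\n g_i\n_{L^\infty(0,1;H^2)}=:C_\varepsilon.\]
For the remainder, any window $[t,t+1]$ meets at most two consecutive integer intervals, so with $n=\lfloor t\rfloor$,
\[\n\un\n_{L^4(t,t+1;L^{12})}^4\leq\n\un\n_{L^4(n,n+1;L^{12})}^4+\n\un\n_{L^4(n+1,n+2;L^{12})}^4\leq2\big(2^{-1/4}\varepsilon\big)^4=\varepsilon^4,\]
which yields $\sup_{t\geq0}\n\un\n_{L^4(t,t+1;L^{12})}\leq\varepsilon$; this is exactly why the net radius is taken to be $2^{-1/4}\varepsilon$ rather than $\varepsilon$. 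This already produces the stated decomposition with a finite constant $C_\varepsilon$.

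The main obstacle is the last assertion of the lemma, namely that $C_\varepsilon$ may be taken to depend only on $\varepsilon$ and the universal bound $C_1$ of (\ref{4-4-1}) and \emph{not} on the particular set $\bw$. A net built directly from $\mathcal{K}|_{t\in[0,1]}$ has radius controlled by $\varepsilon$, but the $H^2$-size of its elements a priori reflects how strongly each trajectory must be regularized, which is a feature of $\mathcal{K}$, hence of $\bw$. To remove this dependence I would instead generate the profiles by a single $\bw$-independent smoothing operator, e.g. $\um:=e^{\kappa\Delta}u$ and $\un:=(I-e^{\kappa\Delta})u$ with the Dirichlet Laplacian; then $\n\um\n_{H^2}\leq C\kappa^{-1/2}\n\nabla u\n_{L^2}$ is bounded solely through the ambient $\e$-bound of $\mathcal{B}_{\delta_0}$ (recall $\mathcal{A}_0$ is bounded in $\e_1$, so the uniform $L^\infty(\mathbb{R}^+;H^1_0)$-bound of the solutions is governed by $\mathcal{A}_0$, $\delta_0$ and $\n g\n_{L^2}$ alone), giving a $\bw$-independent $C_\varepsilon=C_\varepsilon(\kappa)$.

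The price of this reformulation is that one must then fix $\kappa=\kappa(\varepsilon)$, independent of $\bw$, so that $\sup_{u\in\mathcal{K}}\n(I-e^{\kappa\Delta})u\n_{L^4(t,t+1;L^{12})}\leq\varepsilon$. This uniform smoothing estimate is the genuinely delicate step: compactness of $\mathcal{K}|_{t\in[0,1]}$ supplies such a $\kappa$ for free, since $e^{\kappa\Delta}\to I$ strongly and hence uniformly on compacta, but that $\kappa$ again depends on $\mathcal{K}$; securing a rate that depends only on $C_1$ and the fixed $\e$-bound is awkward in the non-Hilbert topology of $L^{12}$, because $\n(I-e^{\kappa\Delta})u\n_{L^{12}}$ cannot be interpolated from the $L^2$-rate $\n(I-e^{\kappa\Delta})u\n_{L^2}\lesssim\kappa^{1/2}\n\nabla u\n_{L^2}$ without an endpoint norm beyond $L^{12}$. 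I expect this to be the crux of the proof, and would attack it by exploiting the equation satisfied by $u$, together with the Strichartz estimate (\ref{Stri}) and the uniform bound $C_1$, to upgrade the regularity of the trajectories just enough to make the $L^{12}$ smoothing rate quantitative and $\bw$-independent.
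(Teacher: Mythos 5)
Your construction coincides with the paper's: cover the compact set $\mathcal{K}|_{t\in[0,1]}$ in $L^4(0,1;L^{12})$ by a finite net, globalize in time using the translation invariance $T_h\mathcal{K}\subset\mathcal{K}$ from the proof of Lemma \ref{lemma4.4}, and control an arbitrary window $[t,t+1]$ by the two consecutive integer intervals it meets. (The paper takes an $\varepsilon/4$-net inside $\mathcal{K}|_{t\in[0,1]}$ itself, smooths each center at the cost of another $\varepsilon/4$, and bounds the window by $\varepsilon/2+\varepsilon/2$ via the triangle inequality; your $2^{-1/4}\varepsilon$ radius combined with the fourth-power additivity of the $L^4$ norm over disjoint intervals is equivalent bookkeeping.) So the first two assertions of the lemma are obtained in essentially the same way in both arguments.

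The genuine gap in your proposal is exactly the point you flag yourself: the $\bw$-independence of $C_\varepsilon$. Your heat-semigroup reformulation $\um=e^{\kappa\Delta}u$ is left incomplete, since you cannot fix $\kappa=\kappa(\varepsilon)$ uniformly over $\mathcal{K}$ without a quantitative $L^{12}$ smoothing rate that you do not establish. The paper's route to uniformity is different and rests on (\ref{4-4-1}): because the net centers $w^i$ are taken from $\mathcal{K}|_{t\in[0,1]}$ itself, they satisfy $\n w^i\n_{L^4(0,1;L^{12})}\leq C_1$ with $C_1=C_1(\mathcal{A}_0)$ independent of $\bw$, and the paper then \emph{asserts} that each such $w^i$ admits a smooth approximant $\tilde w^i$ with $\n\tilde w^i-w^i\n_{L^4(0,1;L^{12})}\leq\varepsilon/4$ and $\n\tilde w^i\n_{C(0,1;H^2)}\leq C_\varepsilon(\varepsilon,C_1)$, independent of $w^i$. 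You should be aware that this assertion is given without proof, and taken literally (an $H^2$-smooth $\varepsilon$-approximant whose size is controlled solely by the $L^4(L^{12})$-norm of the approximated function) it cannot hold for arbitrary elements of a ball of $L^4(0,1;L^{12})$, since that would make bounded sets totally bounded; it must exploit either the finiteness of the net or the fact that the $w^i$ are trajectories of the equation. So your diagnosis of where the difficulty lies is accurate, but your proposal does not close it, and the step it leaves open is precisely the one the paper covers only by assertion, anchored to the uniform bound $C_1$ of (\ref{4-4-1}).
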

	\begin{proof}
		Due to the compactness $\mathcal{K}|_{t\in[0,1]}\subset\subset L^4(0,1;L^{12})$, for any $\varepsilon>0$, there is a finite set $\{w^i\}_{i=1}^m\subset \mathcal{K}|_{t
			\in[0,1]}$ such that
		\[\mathcal{K}|_{t\in[0,1]}\subset\bigcup_{1\leq i\leq m}B_{L^4(0,1;L^{12})}(w^i,\frac{\varepsilon}{4}),\]
		where $B_X(x_0,r)$ denotes the $r-$ball centered on $x_0$ in the space $X.$ In addition, we have by (\ref{4-4-1}),
		\[\sup_{1\leq i\leq m}\n w^i\n_{L^4(0,1;L^{12})}\leq C_1.\]
		We can approximate $w^i$ by smoother functions, namely, there are $\{\tilde{w}^i\}_{i=1}^m$ such that 
		\[\n \tilde{w}^i-w^i\n_{L^4(0,1;L^{12})}\leq\frac{\varepsilon}{4}\quad and\quad \n \tilde{w}^i\n_{C(0,1;H^2)}\leq C_\varepsilon,\]
		where the constant $C_\varepsilon$ is dependent on $\varepsilon$ and $C_1$, but is independent of $w^i.$ Then,
		\[\mathcal{K}|_{t\in[0,1]}\subset\bigcup_{1\leq i\leq m}B_{L^4(0,1;L^{12})}(\tilde{w}^i,\frac{\varepsilon}{2}).\]
		\indent Now, for every $u\in\mathcal{K}$ and for any $n\in\mathbb{N}$, choose $\tilde{w}^{i_n}$ such that
		\[u|_{t\in[n,n+1]}=T_nu|_{t\in[0,1]}\in B_{L^4(0,1;L^{12})}(\tilde{w}^{i_n},\frac{\varepsilon}{2}).\]
		Define the function $\um(t)$ as 
		\[\um(t)=\tilde{w}^{i_n}(t-n),\ if\ t\in[n,n+1),\ \forall n\in\mathbb{N},\]
		and the function $\un(t)=u(t)-\um(t).$ Then, 
		\[\n \um\n_{L^\infty(\mathbb{R}^+;H^2)}\leq\sup_{n\in\mathbb{N}}\n \tilde{w}^{i_n}(t-n)\n_{C(n,n+1;H^2)}\leq C_\varepsilon,\]
		and for any $t\geq0,$ $[t,t+1]\subset[n,n+2)$ for some $n$,
		\begin{align*}
			\n \un\n_{L^4(t,t+1;L^{12})}&\leq\n u-\um\n_{L^4(n,n+1;L^{12})}+\n u-\um\n_{L^4(n+1,n+2;L^{12})}\\
			&\leq \n T_nu-\tilde{w}^{i_n}\n_{L^4(0,1;L^{12})}+\n T_{n+1}u-\tilde{w}^{i_{n+1}}\n_{L^4(0,1;L^{12})}\\
			&\leq \frac{\varepsilon}{2}+\frac{\varepsilon}{2}\leq\varepsilon.
		\end{align*}
		The Lemma is proved.
	\end{proof}
	Now, we are ready to prove the main result in this section.
	\begin{thm}\label{dissipation}
		Let $f$ satisfy assumptions (\ref{1.2})-(\ref{1.7}). Then, the dynamical system $(\e_1,S(t))$ is dissipative.
	\end{thm}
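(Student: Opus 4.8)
The plan is to run the same energy scheme as in the sub-quintic Theorem~\ref{thm4.1}, but to replace the now-invalid pointwise splitting $|f'(u)|\leq\varepsilon|u|^4+C_\varepsilon$ by the trajectory decomposition of Lemma~\ref{lemma4.5}. First I would fix an arbitrary bounded set $B\subset\e_1$ and use the $\e$-dissipativity from Lemma~\ref{lemma3.3} and Remark~\ref{remark3.4} to produce $t_1=t_1(B)$ with $S(t)B\subset\mathcal{B}_{\delta_0}$ for all $t\geq t_1$, together with the uniform bound $\n\xi_u\n_{L^\infty(t_1,\infty;\e)}\leq C_{\mathcal{B}_0}$. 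After shifting time by $t_1$, the solution belongs to the class $\mathcal{K}$ attached to $\bw$, so Lemma~\ref{lemma4.4} gives $\sup_{t\geq t_1}\n u\n_{L^4(t,t+1;L^{12})}\leq C_1$ and, for a parameter $\varepsilon>0$ to be chosen, Lemma~\ref{lemma4.5} supplies the splitting $u=\un+\um$ with $\sup_{t\geq t_1}\n\un\n_{L^4(t,t+1;L^{12})}\leq\varepsilon$ and $\n\um\n_{L^\infty(\mathbb{R}^+;H^2)}\leq C_\varepsilon$. As in (\ref{4-1-2}), Step~1 of Theorem~\ref{well} bounds the starting datum $\n\xi_v(t_1)\n_\e\leq C_B$, where $v:=u_t$.

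Next I would reuse the multiplier identity (\ref{4-1-3}), obtained by testing (\ref{eq3.1}) with $v_t+\alpha v$: with $\Phi(v)=\n\xi_v\n_\e^2+2\alpha(v_t,v)$ and $\alpha=\alpha(\gamma,\lambda_1)$ small so that $\Gamma>0$ and the equivalence (\ref{4-1-4}) hold, the lower-order term is controlled by (\ref{1.4}) via $-2\alpha(f'(u)v,v)\leq 2\alpha K\n u_t\n_{L^2}^2\leq C_{\mathcal{B}_0}$. Everything then hinges on the term $-2(f'(u)v,v_t)$, for which I would use the growth bound (\ref{1.2}) in the form $|f'(u)|\leq C(1+|\un|^4+|\um|^4)$ and split the two contributions.

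The heart of the argument is that the two pieces play structurally different roles. For the smooth part I would use $H^2\hookrightarrow L^\infty$ to get $\n\um\n_{L^\infty}^4\leq C_\varepsilon$ and Hölder with the $\e$-bound $\n v\n_{L^2}=\n u_t\n_{L^2}\leq C_{\mathcal{B}_0}$, so that $C((1+|\um|^4)|v|,|v_t|)\leq C_\varepsilon\n u_t\n_{L^2}\n v_t\n_{L^2}\leq\tfrac{\alpha}{2}\n v_t\n_{L^2}^2+C_{\varepsilon,\alpha}$; crucially this contributes a bounded constant after the $\n v_t\n_{L^2}^2$ is absorbed into the left-hand side of (\ref{4-1-3}), and \emph{not} a coefficient in front of $\Phi(v)$. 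For the rough part I would apply Hölder with exponents $\tfrac13+\tfrac16+\tfrac12$ together with $H^1_0\hookrightarrow L^6$, giving $C(|\un|^4|v|,|v_t|)\leq C\n\un\n_{L^{12}}^4\n v\n_{L^6}\n v_t\n_{L^2}\leq C\n\un\n_{L^{12}}^4\Phi(v)$ by (\ref{4-1-4}). Collecting everything yields the differential inequality $\tfrac{d}{dt}\Phi(v)+\alpha\Phi(v)\leq C\n\un\n_{L^{12}}^4\Phi(v)+C_{\varepsilon,\alpha}$.

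Finally I would invoke the Gronwall-type Lemma~\ref{Gronwall} with $h(t)=C\n\un\n_{L^{12}}^4$. By Lemma~\ref{lemma4.5} its integral obeys $\int_s^t h(\tau)\,d\tau\leq C\varepsilon^4(t-s+1)$, so choosing $\varepsilon$ small enough that $C\varepsilon^4\leq\tfrac{\alpha}{2}$ meets the hypothesis $\int_s^t h\leq\tfrac{\alpha}{2}(t-s)+m$ with a bounded $m$. This produces $\Phi(v(t))\leq C\Phi(v(t_1))e^{-\frac{\alpha}{2}(t-t_1)}+C_{\varepsilon,\alpha}$; passing back through (\ref{4-1-4}) and the first inequality of (\ref{3-6-5}), together with the $\e$-bound on $\xi_u$, then gives $\n\xi_u(t)\n_{\e_1}\leq C_Be^{-\frac{\alpha}{4}(t-t_1)}+C(\mathcal{B}_0,\n g\n_{L^2})$ for $t\geq t_1$, whence a bounded absorbing set in $\e_1$ and dissipativity. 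The main obstacle is precisely the estimate of $(f'(u)v,v_t)$: since $f'$ grows exactly like $|u|^4$ one cannot gain a small constant by the sub-quintic trick, and the decomposition is what allows the $H^2$-bounded piece to be absorbed as a constant while only the small-Strichartz piece survives as a coefficient of $\Phi(v)$, making its time average small enough for Lemma~\ref{Gronwall}.
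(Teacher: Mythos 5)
Your proposal is correct and follows essentially the same route as the paper: reduction to the positively invariant compact set $\bw$, the decomposition $u=\un+\um$ from Lemma \ref{lemma4.5}, the multiplier $v_t+\alpha v$ from Theorem \ref{thm4.1}, and Lemma \ref{Gronwall} applied with a coefficient whose unit-interval time averages are made small by the choice of $\varepsilon$. The only (harmless) deviation is in how $(f'(u)v,v_t)$ is split: you use the pointwise bound $|f'(u)|\leq C(1+|\un|^4+|\um|^4)$, so the Gronwall coefficient is $C\n\un\n_{L^{12}}^4$ with average $O(\varepsilon^4)$, whereas the paper writes $f'(u)=(f'(u)-f'(\um))+f'(\um)$ and extracts a single factor of $\un$, obtaining $l_\varepsilon(t)=C(1+\n\un\n_{L^{12}}^3+\n u\n_{L^{12}}^3)\n\un\n_{L^{12}}$ with average $O(\varepsilon)$ --- both suffice.
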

	\begin{proof}
		We will proceed analogously to the proof of Theorem \ref{thm4.1}, but will estimate the term $(f'(u)v,v_t)$ in a different way. Take any initial data $\xi_u(0)\in\bw$. Due to Lemma \ref{lemma4.5}, for any $\varepsilon>0$, we can split the solution into $u(t)=\un(t)+\um(t)$, such that  
		\begin{equation}\label{4-6-1}
			\sup_{t\geq0}\n \un \n_{L^4(t,t+1;L^{12})}\leq\varepsilon
		\end{equation}
		and 
		\[\n \um\n_{L^\infty(\mathbb{R}^+;H^2)}\leq C_\varepsilon,\]
		where the constant $C_\varepsilon$ depends only on $\varepsilon$ and $\mathcal{A}_0$. Using this decomposition, we have the following estimate:
		\begin{align*}
			|(f'(u)v,v_t)|&\leq |((f'(\un+\um)-f'(\um))v,v_t)|+|(f'(\um)v,v_t)|\\
			&\leq C((1+|\un|^3+|\um|^3)|\un|,|v||v_t|)+||f'(\um)||_{L^\infty}||v||_{L^2}||v_t||_{L^2}\\
			&\leq C(1+||\un||_{L^{12}}^3+||u||_{L^{12}}^3)||\un||_{L^{12}}\n v\n_{L^6}\n v_t\n_{L^2}\\
			&\quad +C(1+\n \um\n^4_{H^2})||u_t||_{L^2}||v_t||_{L^2}\\
			&\leq C(1+||\un||_{L^{12}}^3+||u||_{L^{12}}^3)||\un||_{L^{12}}\n \xi_v\n^2_{\e} +C_{\varepsilon,\alpha}||u_t||_{L^2}^2+\alpha\n v_t\n^2_{L^2}\\
			&\leq l_\varepsilon(t)\n \xi_v\n^2_{\e}+C_{\varepsilon,\alpha}||u_t||_{L^2}^2+\alpha\n v_t\n^2_{L^2},
		\end{align*}
	    where $l_\varepsilon(t)=C(1+||\un||_{L^{12}}^3+||u||_{L^{12}}^3)||\un||_{L^{12}}.$
	    Due to (\ref{4-4-1}) and (\ref{4-6-1}), we have for any $t\geq0,$
	    \begin{align*}
	    	\int_{t}^{t+1}l_\varepsilon(\tau)d\tau&\leq C\left(\int_{t}^{t+1}(1+||\un||_{L^{12}}^3+||u||_{L^{12}}^3)^\frac{4}{3}\right)^\frac{3}{4}\cdot\left(\int_{t}^{t+1}||\un||_{L^{12}}^4\right)^\frac{1}{4}\\
	    	&\leq C(1+||\un||_{L^4(t,t+1;L^{12})}^3+||u||_{L^4(t,t+1;L^{12})}^3)\cdot ||\un||_{L^4(t,t+1;L^{12})}\\
	    	&\leq C\varepsilon,
	    \end{align*}
        where the constant $C$ is independent of $\varepsilon$. This implies that 
        \[\int_{s}^{t}l_\varepsilon(\tau)d\tau\leq C\varepsilon(t-s+1),\quad\forall t\geq s\geq0.\]
        Since, $S(t)\bw\subset\bw\subset\mathcal{B}_{\delta_0},\ \forall t\geq0$, we have
        \[||u_t||_{L^2}^2\leq\n\xi_u(t)\n_{\e}^2\leq C_{\mathcal{B}_{\delta_0}}=C(\delta_0,\mathcal{A}_0),\quad\forall t\geq0.\]
        Fixing now $\varepsilon>0$ to be small enough and arguing as in the end of Theorem \ref{thm4.1}, we arrive at 
        \[\n\xi_u(t)\n_{\e_1}\leq C(\n\xi_u(0)\n_{\e_1})e^{-\frac{\alpha}{4}t}+C(\delta_0,\mathcal{A}_0,\n g\n_{L^2}),\quad \forall t\geq0,\ \xi_u(0)\in\bw.\]
        In particular, for any $\xi_u(0)\in S(t_1)B$, the above estimate holds. Then, there exists $t_2=t_2(B)>0$, such that
        \[\n S(t)\xi_0\n_{\e_1}\leq 1+C(\delta_0,\mathcal{A}_0,\n g\n_{L^2}),\quad\forall \xi_0\in B,\ \forall t\geq t_1+t_2.\]
        Therefore, the semigroup $(\e_1,S(t))$ is dissipative and the set $\tilde{\mathbb{B}}:=\{\xi\in\e_1:\n\xi\n_{\e_1}\leq1+ C(\delta_0,\mathcal{A}_0,\n g\n_{L^2})\}$ is a bounded absorbing set.
	\end{proof}

\section{Quasi stability}
	In this section, we want to show that the dynamical system $(\e_1,S(t))$ is quasi-stable on every positively invariant bounded set (see Definition \ref{defquasi}).
    \begin{lemma}\label{lemma5.1}
    	Under the assumption (\ref{1.2}), the operator
    	\begin{align*}
    		T_f:H^2\cap H^1_0&\longrightarrow H^1_0\\
    		&u\longmapsto f(u)
    	\end{align*}
    	is compact.
    \end{lemma}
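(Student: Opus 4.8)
The plan is to exploit the fact that in dimension three $\hh$ embeds compactly not only into $H^1_0$ (Rellich--Kondrachov) but also into $C(\bar\Omega)$: indeed $H^2(\Omega)\hookrightarrow C^{0,\alpha}(\bar\Omega)$ for every $\alpha<1/2$ by Morrey's inequality, and this H\"older embedding is compact, whence $\hh\hookrightarrow\hookrightarrow C(\bar\Omega)$. First I would check that $T_f$ is well defined. For $u\in\hh$ we have $\n u\n_{L^\infty}\le C\n u\n_{H^2}<\infty$, so $f'(u)$ is bounded on the compact range of $u$ (using $f\in C^2$ together with the growth bound (\ref{1.2})); consequently $\nabla f(u)=f'(u)\nabla u\in L^2$ and $f(u)\in L^2$, while $u|_{\partial\Omega}=0$ and $f(0)=0$ give $f(u)|_{\partial\Omega}=0$, so $f(u)\in H^1_0$.

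To prove compactness in the nonlinear sense, i.e.\ that $T_f$ sends bounded sets to relatively compact subsets of $H^1_0$, I would argue sequentially. Let $\{u_n\}$ be bounded in $\hh$. By the two compact embeddings above, extract a subsequence (not relabelled) with $u_n\to u$ strongly in $H^1_0$ and $u_n\to u$ uniformly on $\bar\Omega$. In particular $\{u_n\}$ and $u$ are bounded in $L^\infty$ by some $M$, and since $f'$ is uniformly continuous on $[-M,M]$, uniform convergence of $u_n$ yields $f'(u_n)\to f'(u)$ uniformly on $\bar\Omega$. The core estimate is then to control $\n f(u_n)-f(u)\n_{H^1_0}$, which by the Poincar\'e inequality is equivalent to $\n f'(u_n)\nabla u_n-f'(u)\nabla u\n_{L^2}$. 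I would split this as
\[
f'(u_n)\bigl(\nabla u_n-\nabla u\bigr)+\bigl(f'(u_n)-f'(u)\bigr)\nabla u,
\]
bounding the first term by $\n f'(u_n)\n_{L^\infty}\,\n\nabla u_n-\nabla u\n_{L^2}\le C_M\n u_n-u\n_{H^1_0}\to0$, and the second by $\n f'(u_n)-f'(u)\n_{L^\infty}\,\n\nabla u\n_{L^2}\to0$. Hence $f(u_n)\to f(u)$ in $H^1_0$ along the subsequence, which is exactly the relative compactness of $T_f(\{u_n\})$.

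The only structural ingredient is the compact embedding $\hh\hookrightarrow\hookrightarrow C(\bar\Omega)$, which furnishes both the uniform $L^\infty$ bound and the uniform convergence of $\{u_n\}$; once these are in hand the estimate is elementary. I expect the step demanding the most care to be the passage from uniform convergence of $u_n$ to uniform convergence of $f'(u_n)$, which relies on the continuity of $f'$ on a fixed compact interval rather than on any quantitative use of the cubic growth in (\ref{1.2}) --- the cubic bound is not actually exploited quantitatively here, precisely because the embedding into $L^\infty$ already tames every pointwise nonlinearity of $u$ in the three-dimensional setting.
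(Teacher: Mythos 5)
Your proof is correct. It shares with the paper the same algebraic decomposition of the difference of gradients, $f'(u)(\nabla u-\nabla v)+(f'(u)-f'(v))\nabla v$, but the mechanism by which compactness is concluded is different. The paper turns this decomposition into a quantitative local Lipschitz estimate,
\[
\n f(u)-f(v)\n_{H^1_0}\leq C(1+\n u\n_{H^2}^4+\n v\n_{H^2}^4)\n u-v\n_{H^1_0},
\]
obtained by estimating $f'(u)-f'(v)$ through the growth bound (\ref{1.2}) and an $L^\infty\cdot L^6\cdot L^3$ H\"older splitting; compactness then follows in one line because a map that is Lipschitz in the $H^1_0$ metric sends the totally bounded set $B\subset H^1_0$ (totally bounded by the compact embedding $\hh\hookrightarrow\hookrightarrow H^1_0$) to a totally bounded set. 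You instead argue sequentially, using the additional compact embedding $\hh\hookrightarrow\hookrightarrow C(\bar\Omega)$ to upgrade a bounded sequence to one converging uniformly, and then only need qualitative uniform continuity of $f'$ on a fixed compact interval. Your route is more elementary and makes transparent that the quintic growth in (\ref{1.2}) plays no quantitative role in this particular lemma (any $f\in C^1$ with $f(0)=0$ would do once $H^2\hookrightarrow L^\infty$ is available); the paper's route costs a little more bookkeeping but delivers the explicit Lipschitz constant, which is not incidental: that same estimate is reused in Section 5 (e.g.\ to bound $\n \fot\n_{L^\infty(0,T;H^1_0)}$ by $C_B\sup_t\n z\n_{H^1_0}$ in the quasi-stability argument), so proving only compactness here would leave a gap later. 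Your well-definedness check ($f(u)\in H^1_0$ via the chain rule and $f(0)=0$) is a point the paper glosses over.
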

	\begin{proof}
		For any $u,v\in H^2(\Omega)\cap H^1_0(\Omega)$,
		\begin{align*}
			\n f(u)-f(v)\n_{H^1_0(\Omega)}&=\n f'(u)\nabla u-f'(v)\nabla v\n_{L^2}\\
			&\leq \n f'(u)(\nabla u-\nabla v)\n_{L^2}+\n (f'(u)-f'(v))\nabla v\n_{L^2}\\
			&\leq C(1+\n u\n_{H^2}^4)\n\nabla(u-v)\n_{L^2}\\
			&\quad +C(1+\n u\n_{H^2}^3+\n v\n_{H^2}^3)\n\nabla v\n_{L^3}\n u-v\n_{L^6}\\
			&\leq C(1+\n u\n_{H^2}^4+\n v\n_{H^2}^4)\n u-v\n_{H^1_0}.
		\end{align*}
		Since the embedding $H^2\cap H^1_0\hookrightarrow H^1_0$ is compact, we assert that $T_f$ is a compact operator.
	\end{proof}
	
	\begin{lemma}\label{lemma5.2}
		Let the assumptions of Theorem \ref{dissipation} hold. Then there exists $T_0>0$ and a constant $c$ independent of $T$, such that for any pair $u(t)$ and $v(t)$ of strong solution to problem (\ref{eq1.1}), we have the following relation
		\begin{equation}
			TE_z(T)+\int_{0}^{T}E_z(t)dt\leq c\left\{\int_{0}^{T}\n \nabla z_t\n_{L^2}^2dt+\left|\int_{0}^{T}(\nabla z_t,\nabla z)dt\right|+\Psi_T(u,v)\right\}
		\end{equation}
		for any $T\geq T_0$, where $z(t)=u(t)-v(t)$ and we use the following notations:
		\[E_z(t)=\frac{1}{2}\n\xi_z\n_{\e_1}^2=\frac{1}{2}(\n\nabla z_t\n_{L^2}^2+\n\Delta z\n_{L^2}^2),\]
		\begin{align*}
			\Psi_T(u,v)&=\left|\int_{0}^{T}(\fot,-\Delta z)dt\right|+\left|\int_{0}^{T}(\fot,-\Delta z_t)dt\right|\\
			&+\left|\int_{0}^{T}\int_{t}^{T}(\fot,-\Delta z_t) d\tau dt\right|
		\end{align*}
		with $G_{u,v}(t)=f(u(t))-f(v(t))$.
	\end{lemma}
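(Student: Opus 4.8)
The plan is to estimate the difference $z=u-v$ by combining two energy multipliers for the difference equation with a single integration in time that manufactures the weight $T$ in front of $E_z(T)$, and then to absorb the leftover boundary energies by using that $T$ is large. Subtracting the two copies of (\ref{eq1.1}) satisfied by $u$ and $v$ gives
\[
z_{tt}+\gamma z_t-\Delta z=-\fot,\qquad \fot=f(u)-f(v),
\]
with $z\in L^\infty(0,T;\hh)$ and $z_t\in L^\infty(0,T;H^1_0)$. All identities below are formal and are justified exactly as the analogous multiplier computations in the proof of Theorem \ref{well} (via Faedo--Galerkin approximation). I will test this equation against $-\Delta z_t$ and against $-\Delta z$.

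First I would multiply by $-\Delta z_t$ and integrate over $\Omega$, obtaining $\frac{d}{dt}E_z+\gamma\n\nabla z_t\n_{L^2}^2=-(\fot,-\Delta z_t)$; integrating over $[0,T]$ this yields
\[
E_z(0)\le E_z(T)+\gamma\int_0^T\n\nabla z_t\n_{L^2}^2\,dt+\Big|\int_0^T(\fot,-\Delta z_t)\,dt\Big|,
\]
where the last term is the second piece of $\Psi_T$. Next I would multiply by $-\Delta z$, use $(z_{tt},-\Delta z)=\frac{d}{dt}(\nabla z_t,\nabla z)-\n\nabla z_t\n_{L^2}^2$, and integrate over $\Omega\times[0,T]$ to represent $\int_0^T\n\Delta z\n_{L^2}^2\,dt$ in terms of $\int_0^T\n\nabla z_t\n_{L^2}^2\,dt$, the cross term $\int_0^T(\nabla z_t,\nabla z)\,dt$, the time-boundary term $[(\nabla z_t,\nabla z)]_0^T$, and $\int_0^T(\fot,-\Delta z)\,dt$ (the first piece of $\Psi_T$). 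Forming $\int_0^T E_z\,dt$, bounding $|(\nabla z_t,\nabla z)|\le CE_z$ at $t=0,T$ by Poincar\'e ($\n\nabla z\n_{L^2}\le C\n\Delta z\n_{L^2}$) and Young, and using the previous display to eliminate $E_z(0)$, I reach a bound of the form
\[
\int_0^T E_z\,dt\le C_1\Big(\int_0^T\n\nabla z_t\n_{L^2}^2\,dt+\Big|\int_0^T(\nabla z_t,\nabla z)\,dt\Big|+E_z(T)+\Psi_T\Big),
\]
with $C_1$ independent of $T$.

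To produce the factor $T$ I would integrate the energy identity on $[s,T]$, namely $E_z(T)=E_z(s)-\gamma\int_s^T\n\nabla z_\tau\n_{L^2}^2\,d\tau-\int_s^T(\fot,-\Delta z_\tau)\,d\tau$, over $s\in[0,T]$; by Fubini the damping term equals $-\gamma\int_0^T\tau\n\nabla z_\tau\n_{L^2}^2\,d\tau\le0$ and may be discarded, leaving
\[
TE_z(T)\le\int_0^T E_z\,dt+\Big|\int_0^T\!\!\int_t^T(\fot,-\Delta z_\tau)\,d\tau\,dt\Big|,
\]
whose last term is exactly the third piece of $\Psi_T$. Substituting the bound for $\int_0^T E_z\,dt$ into this inequality gives $TE_z(T)\le C_1E_z(T)+C_1\big(\int_0^T\n\nabla z_t\n_{L^2}^2+|\int_0^T(\nabla z_t,\nabla z)|\big)+(C_1+1)\Psi_T$; choosing $T_0:=2C_1$ and $T\ge T_0$ lets me absorb $C_1E_z(T)$ into the left-hand side, so that $TE_z(T)\le c\{\int_0^T\n\nabla z_t\n_{L^2}^2+|\int_0^T(\nabla z_t,\nabla z)|+\Psi_T\}$ with $c$ independent of $T$. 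Feeding this back (using $E_z(T)\le T_0^{-1}\,TE_z(T)$) bounds $\int_0^T E_z\,dt$ by the same right-hand side, and adding the two estimates proves the lemma.

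The main obstacle is the final absorption step. Both multipliers inevitably generate the endpoint energies $E_z(0)$ and $E_z(T)$, which are of exactly the same strength as the quantities being estimated, so the inequality cannot close by a naive Young or Grönwall argument; it closes only because the extra factor $T$ from the last display makes the coefficient $T-C_1$ of $E_z(T)$ positive once $T\ge T_0$. Keeping the resulting constant $c$ independent of $T$ hinges precisely on performing this absorption rather than iterating, and on verifying at each stage that the constants entering $C_1$ (from Poincar\'e, Young, and the elliptic bound $\n\nabla z\n_{L^2}\le C\n\Delta z\n_{L^2}$) carry no dependence on the length of the interval.
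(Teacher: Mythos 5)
Your proposal is correct and follows essentially the same route as the paper: the same two multipliers $-\Delta z_t$ and $-\Delta z$, the same integration of the energy identity over $[s,T]$ followed by integration in $s$ to manufacture the weight $TE_z(T)$ (with the nonnegative iterated damping term discarded), the same use of the once-integrated energy identity to eliminate $E_z(0)$, and the same absorption of the endpoint energy by taking $T\geq T_0$ comparable to the constant $C_1$. The only cosmetic difference is that you recover $\int_0^T E_z\,dt$ on the left by feeding the $TE_z(T)$ bound back through $E_z(T)\leq T_0^{-1}TE_z(T)$, whereas the paper carries both terms through the combination of its inequalities directly; both are valid.
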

	\begin{proof}
		The variable $z$ solves the equation 
		\begin{eqnarray}\label{z}
			z_{tt}+\gamma z_t-\Delta z=f(v(t))-f(u(t)),\quad \xi_z(0)=\xi_u(0)-\xi_v(0).
		\end{eqnarray}
	    By Lemma \ref{lemma5.1}, $f(v(t))-f(u(t))\in L^\infty(0,T;H^1_0)$. 
		Multiplying (\ref{z}) by $-\Delta z_t$ and integrating over $x\in\Omega$,
		\begin{eqnarray}\label{one}
			\frac{d}{dt}E_z(t)+\gamma\n\nabla z_t\n_{L^2}^2+(\fot,-\Delta z_t)=0.
		\end{eqnarray}
		Integrating (\ref{one}) on $[t,T]$,
		\[E_z(T)+\gamma\int_{t}^{T}\n \nabla z_t\n_{L^2}^2d\tau=E_z(t)-\int_{t}^{T}(\fot,-\Delta z_t)d\tau.\]
		Integrating the above equality w.r.t. $t$ on $[0,T]$,
		\begin{equation}\label{two}
			TE_z(T)+\gamma\int_{0}^{T}\int_{t}^{T}\n \nabla z_t\n_{L^2}^2d\tau dt=\int_{0}^{T}E_z(t)dt-\int_{0}^{T}\int_{t}^{T}(\fot,-\Delta z_t) d\tau dt.
		\end{equation}
		Multiplying (\ref{z}) by $-\Delta z$ and integrating over $x\in\Omega$,
		\[\frac{d}{dt}(\nabla z_t,\nabla z)-\n \nabla z_t\n_{L^2}^2+\gamma(\nabla z_t,\nabla z)+\n\Delta z\n_{L^2}^2+(\fot,-\Delta z)=0,\]
		Integrating this equation on $[0,T],$
		\begin{equation}\label{three}
			\begin{split}
				\int_{0}^{T}E_z(t)dt&=\frac{1}{2}(\nabla z_t(0),\nabla z(0))-\frac{1}{2}(\nabla z_t(T),\nabla z(T))
				+\int_{0}^{T}\n \nabla z_t\n_{L^2}^2dt\\
				&\quad -\frac{\gamma}{2}\int_{0}^{T}(\nabla z_t,\nabla z)dt-\frac{1}{2}\int_{0}^{T}(\fot,-\Delta z) dt\\
				&\leq c(E_z(0)+E_z(T))+\int_{0}^{T}\n \nabla z_t\n_{L^2}^2dt\\
				&\quad +\frac{\gamma}{2}\left|\int_{0}^{T}(\nabla z_t,\nabla z)dt\right|+\frac{1}{2}\left|\int_{0}^{T}(\fot,-\Delta z) dt\right|,
			\end{split}
		\end{equation}
		where the constant $c$ depends only on $\Omega$. Integrating (\ref{one}) on $[0,T]$,
		\begin{equation}\label{energy}
			E_z(0)=E_z(T)+\gamma\int_{0}^{T}\n \nabla z_t\n_{L^2}^2dt+\int_{0}^{T}(\fot,-\Delta z_t)dt.
		\end{equation}
		Inserting this quation into (\ref{three}), we have
		\begin{equation}\label{four}
			\begin{split}
				\int_{0}^{T}E_z(t)dt
				&\leq c\left(E_z(T)+\int_{0}^{T}\n \nabla z_t\n_{L^2}^2dt+\left|\int_{0}^{T}(\nabla z_t,\nabla z)dt\right|\right)\\
				&\quad +c\left(\left|\int_{0}^{T}(\fot,-\Delta z) dt\right|+\left|\int_{0}^{T}(\fot,-\Delta z_t)dt\right|\right).
			\end{split}
		\end{equation}
		Combing (\ref{two}) and (\ref{four}), we can choose $T_0\geq 2c$ such that
		\begin{align*}
			TE_z(T)+\int_{0}^{T}E_z(t)dt&\leq c\left(\int_{0}^{T}\n \nabla z_t\n_{L^2}^2dt+\left|\int_{0}^{T}(\nabla z_t,\nabla z)dt\right|+\Psi_T(u,v)\right),
		\end{align*}
		for every $T\geq T_0$, where
		\begin{align*}
			\Psi_T(u,v)&=\left|\int_{0}^{T}(\fot,-\Delta z) dt\right|+\left|\int_{0}^{T}(\fot,-\Delta z_t)dt\right|\\
			&\quad +\left|\int_{0}^{T}\int_{t}^{T}(\fot,-\Delta z_t) d\tau dt\right|.
		\end{align*}
	\end{proof}
	The main result in this section is the following theorem.
	\begin{thm}\label{quasi}
		Let the assumptions of Theorem \ref{dissipation} hold. Then the dynamical system $(\e_1,S(t))$ is quasi-stable on every positively invariant bounded set.
	\end{thm}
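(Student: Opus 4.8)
The plan is to verify Definition~\ref{defquasi} directly, using Lemma~\ref{lemma5.2} as the engine. I would take the time $t_*$ in the quasi-stability definition to be the $T_0$ provided by Lemma~\ref{lemma5.2} (enlarged if necessary), set $z=u-v$ for two trajectories starting in a positively invariant bounded set $B\subset\e_1$, and observe that Lemma~\ref{lemma5.2} already gives $TE_z(T)\leq TE_z(T)+\int_0^T E_z(t)\,dt$ bounded by a sum of three types of terms: the ``good'' damping term $\int_0^T\n\nabla z_t\n_{L^2}^2\,dt$, the cross term $\bigl|\int_0^T(\nabla z_t,\nabla z)\,dt\bigr|$, and the nonlinear contributions collected in $\Psi_T(u,v)$. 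The target inequality $\n S(T)y_1-S(T)y_2\n_{\e_1}\le q\n y_1-y_2\n_{\e_1}+n_Z(K y_1-K y_2)$ with $q<1$ will follow once I show that, for $T$ large, the first two terms can be absorbed into a genuinely contractive multiple of $E_z(T)$ plus a compact remainder, while $\Psi_T$ is entirely a compact seminorm of the difference of trajectories.

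The key steps, in order, are as follows. First I would control the damping integral $\int_0^T\n\nabla z_t\n_{L^2}^2\,dt$ using the energy identity~(\ref{energy}): it equals $\gamma^{-1}\bigl(E_z(0)-E_z(T)-\int_0^T(\fot,-\Delta z_t)\,dt\bigr)$, so up to the nonlinear term it is bounded by $c\,E_z(0)$, which is exactly $c\n y_1-y_2\n_{\e_1}^2$; similarly the cross term is bounded by $c(E_z(0)+E_z(T))$ plus damping. Feeding these back into Lemma~\ref{lemma5.2} gives $TE_z(T)\le c\,E_z(0)+c\,\Psi_T(u,v)$ where $c$ is independent of $T$, and dividing by $T$ yields $E_z(T)\le \frac{c}{T}E_z(0)+\frac{c}{T}\Psi_T(u,v)$. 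Choosing $T=t_*$ so large that $c/T\le\frac12 q^2$ makes the coefficient of $E_z(0)$ strictly less than one in the $\e_1$-norm; since $E_z(t)=\tfrac12\n\xi_z\n_{\e_1}^2$, this is precisely the required contraction with an explicit $q\in[0,1)$.

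Second I would identify the compact seminorm. I would let $Z=C(0,t_*;H^1_0)\times L^2(0,t_*;L^2)$ (or a similar space built from the lower-order norms of $(z,z_t)$ on $[0,t_*]$) and let $K$ map initial data $y_i$ to the pair of trajectories $(z,z_t)$ restricted to $[0,t_*]$; by the continuous-dependence estimate~(\ref{3-6-0}) this $K$ is globally Lipschitz on the bounded set $B$. The seminorm $n_Z$ would be taken to dominate $\Psi_{t_*}(u,v)$. The crucial point is that each term in $\Psi_T$ involves $\fot=f(u)-f(v)$ paired against $-\Delta z$ or $-\Delta z_t$; after integration by parts these pairings become inner products of $\nabla(f(u)-f(v))$ with $\nabla z$ or $\nabla z_t$, which by Lemma~\ref{lemma5.1} (compactness of $T_f:H^2\cap H^1_0\to H^1_0$) and the Lipschitz bound $\n f(u)-f(v)\n_{H^1_0}\le C(1+\n u\n_{H^2}^4+\n v\n_{H^2}^4)\n u-v\n_{H^1_0}$ depend only on the $H^1_0$-norm of $z$, a norm compactly subordinate to the $\e_1$-topology. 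Thus $\Psi_{t_*}$ is estimated by a compact seminorm of $Kz$, completing the three ingredients $(Z,K,n_Z)$.

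The main obstacle I anticipate is the rigorous verification that $\Psi_T$ is genuinely a \emph{compact} seminorm rather than merely a lower-order term, and in particular that the mixed space-time term $\bigl|\int_0^T\int_t^T(\fot,-\Delta z_t)\,d\tau\,dt\bigr|$ can be handled. The time derivative $z_t$ appears paired with $\Delta$, which is at the top $\e_1$-regularity level, so naively this term is not lower-order. The resolution should be to integrate by parts in $t$ to move the derivative off $z_t$ onto $\fot$ (producing $\partial_t f(u)-\partial_t f(v)=f'(u)u_t-f'(v)v_t$, which is controlled in $L^2$ using~(\ref{1.2}) and the uniform $\e_1$-bounds on $B$), thereby trading the dangerous $\n\nabla z_t\n$ factor for lower-order quantities in $z$ and $z_t$ measured in the compactly embedded norms. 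Carrying out this integration by parts carefully, and checking that all resulting boundary and volume terms reduce to $H^1_0$- or $L^2$-norms of $(z,z_t)$ on $[0,t_*]$, is where the real work lies; everything else is bookkeeping built on the already-established dissipativity, continuous dependence, and compactness of $T_f$.
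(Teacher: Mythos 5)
Your overall architecture matches the paper's: start from Lemma \ref{lemma5.2}, eliminate the damping integral via (\ref{energy}), bound the cross term by $\sup_t\n z\n_{H^1_0}^2$, extract the contraction factor $c/(\gamma T+c)$ for $E_z(T)$, and package the remainder as a compact seminorm of the trajectory. However, the step you yourself flag as ``where the real work lies'' --- controlling $(\fot,-\Delta z_t)$ --- is left open, and the route you propose does not actually resolve it. Integrating by parts in $t$ trades $-\Delta z_t$ for $-\Delta z$ paired against $\partial_t\fot=f'(u)z_t+(f'(u)-f'(v))v_t$; the factor $\n\Delta z\n_{L^2}$ is still a top-order $\e_1$-quantity, so you have not reduced to ``lower-order quantities'' as claimed --- you still need an $\varepsilon$-absorption. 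The paper's resolution is more direct and avoids integration by parts entirely: since Lemma \ref{lemma5.1} gives $\fot\in H^1_0$ with $\n\fot\n_{H^1_0}\le C_B\n z\n_{H^1_0}$, one pairs in the $H^1_0$--$H^{-1}$ duality, $|(\fot,-\Delta z_t)|\le C\n\fot\n_{H^1_0}\n\nabla z_t\n_{L^2}\le C_{B,\varepsilon}\n z\n_{H^1_0}^2+\varepsilon\n\nabla z_t\n_{L^2}^2$, and absorbs the $\varepsilon\int_0^T E_z(t)\,dt$ contribution into the term $\int_0^T E_z(t)\,dt$ deliberately retained on the left-hand side of Lemma \ref{lemma5.2} (this is precisely why that term is there). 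The same device handles the double integral, at the cost of a $T$-dependent constant in front of the compact remainder, which is harmless since $T$ is fixed.

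There is also a flaw in your $(Z,K,n_Z)$ setup. Definition \ref{defquasi} requires $n_Z$ to be a \emph{compact seminorm on $Z$}. If you take $Z=C(0,t_*;H^1_0)\times L^2(0,t_*;L^2)$ and let $n_Z$ dominate the $C(0,t_*;H^1_0)$-norm of $z$, then $n_Z$ is (equivalent to) the norm of $Z$ itself and is not compact on $Z$. The space $Z$ must carry the \emph{top-order} space-time norm: the paper takes $Z=\{u:\int_0^T\n\nabla u_t\n_{L^2}^2+\n\Delta u\n_{L^2}^2\,dt<\infty\}$, shows $K\xi_u(0)=u|_{[0,T]}$ is Lipschitz from $(B,\n\cdot\n_{\e_1})$ into $Z$ using the continuous-dependence estimate (\ref{3-6-0}), and then $n_Z(u)=\sqrt{C_{B,T}}\,\n u\n_{C(0,T;H^1_0)}$ is compact on $Z$ by the Aubin--Dubinskii--Lions Lemma \ref{Aubin}. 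With these two repairs your argument closes; as written, it does not.
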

	\begin{proof}
		Let $B\subset\e_1$ be an arbitrary positively invariant bounded set. For every $\xi_1,\ \xi_2\in B$, let $\xi_u(t):=S(t)\xi_1,\ \xi_v(t):=S(t)\xi_2$ and $z(t)=u(t)-v(t).$ Since $S(t)B\subset B,\ \forall t\geq0$, we have 
		\begin{equation}\label{5-3-1}
			\n\xi_u\n_{L^\infty(\mathbb{R^+};\e_1)}+\n\xi_v\n_{L^\infty(\mathbb{R^+};\e_1)}\leq C_B.
		\end{equation}
	    Due to Lemma \ref{lemma5.2}), for fixed $T>T_0,$ we have
	    \begin{equation}\label{5-3-2}
	    	TE_z(T)+\int_{0}^{T}E_z(t)dt\leq c\left\{\int_{0}^{T}\n \nabla z_t\n_{L^2}^2dt+\left|\int_{0}^{T}(\nabla z_t,\nabla z)dt\right|+\Psi_T(u,v)\right\},
	    \end{equation}
	    where the notations $E_z(t)$ and $\Psi_T(u,v)$ are the same as in Lemma \ref{lemma5.2}.\\
		\indent We are going to estimate the right-hand side of (\ref{5-3-2}) term by term. Firstly, from the equality (\ref{energy}), 
		\begin{equation}\label{5-3-3}
			\gamma\int_{0}^{T}\n \nabla z_t\n_{L^2}^2dt=E_z(0)-E_z(T)+\int_{0}^{T}(\fot,-\Delta z_t)dt.
		\end{equation}
	    For the second term in the right hand side of inequality (\ref{5-3-2}),
	    \begin{equation}\label{5-3-4}
	    	\begin{split}
	    		\left|\int_{0}^{T}(\nabla z_t,\nabla z)dt\right|&=\left|\frac{1}{2}\int_{0}^{T}\frac{d}{dt}\n\nabla z\n_{L^2}^2dt\right|\\
	    		&\leq \frac{1}{2}(\n\nabla z(0)\n_{L^2}^2+\n\nabla z(T)\n_{L^2}^2)\\
	    		&\leq\sup_{0\leq t\leq T}\n z\n_{H^1_0}^2.
	    	\end{split}
	    \end{equation}
	    According to Lemma \ref{lemma5.1} and the estimate (\ref{5-3-1})
	    \[\n \fot\n_{L^\infty(0,T;H^1_0)}=\n f(u)-f(v)\n_{L^\infty(0,T;H^1_0)}\leq C_B\cdot\n u-v\n_{L^\infty(0,T;H^1_0)}=C_B\cdot\sup_{0\leq t\leq T}\n z\n_{H^1_0},\]
	    Then, for any $\varepsilon>0$,
	    \begin{equation}\label{5-3-5}
	    	\begin{split}
	    		\int_{0}^{T}\left|(\fot,-\Delta z_t)\right|dt&\leq\left|\int_{0}^{T}\n \fot\n_{H^1_0}\n-\Delta z_t\n_{H^{-1}}dt\right|\\
	    		&\leq C\left|\int_{0}^{T}\n \fot\n_{H^1_0}\n z_t\n_{H^1_0}dt\right|\\
	    		&\leq C\left|\int_{0}^{T}C_\varepsilon\n \fot\n_{H^1_0}^2+\frac{\varepsilon}{C}\n z_t\n_{H^1_0}^2dt\right|\\
	    		&\leq C_{B,\varepsilon}T\cdot\sup_{0\leq t\leq T}\n z\n_{H^1_0}^2+\varepsilon\int_{0}^{T}E_z(t)dt.
	    	\end{split}
	    \end{equation}
	    and 
	    \begin{equation}\label{5-3-6}
	    	\begin{split}
	    		\left|\int_{0}^{T}(\fot,-\Delta z)dt\right|&\leq\left|\int_{0}^{T}\n \fot\n_{H^1_0}\n-\Delta z\n_{H^{-1}}dt\right|\\
	    		&\leq C\left|\int_{0}^{T}\n \fot\n_{H^1_0}\n z\n_{H^1_0}dt\right|\\
	    		&\leq C_{B}T\cdot\sup_{0\leq t\leq T}\n z\n_{H^1_0}^2.
	    	\end{split}
	    \end{equation}
	    Plugging (\ref{5-3-3}), (\ref{5-3-4}), (\ref{5-3-5}), (\ref{5-3-6}) into (\ref{5-3-2}), we have
	    \begin{align*}
	    	TE_z(T)+\int_{0}^{T}E_z(t)dt&\leq \frac{c}{\gamma}(E_z(0)-E_z(T))\\
	    	&\quad +c\cdot\sup_{0\leq t\leq T}\n z\n_{H^1_0}^2+c\left|\int_{0}^{T}(\fot,-\Delta z)dt\right|\\
	    	&\quad +c(\frac{1}{\gamma}+1+T)\int_{0}^{T}\left|(\fot,-\Delta z_t)\right|dt\\
	    	&\leq \frac{c}{\gamma}(E_z(0)-E_z(T))+C_{B,\varepsilon,T}\cdot\sup_{0\leq t\leq T}\n z\n_{H^1_0}^2\\
	    	&\quad +C_T\varepsilon\int_{0}^{T}E_z(t)dt.
	    \end{align*}
	    Choosing $\varepsilon\leq \frac{1}{2C_T}$, we end up with
	    \[	TE_z(T)\leq \frac{c}{\gamma}(E_z(0)-E_z(T))+C_{B,T}\cdot\sup_{0\leq t\leq T}\n z\n_{H^1_0}^2.\]
	    The above inequality implies that
	    \begin{equation}\label{5-3-7}
	    	E_z(T)\leq \frac{c}{\gamma T+c} E_z(0)+C_{B,T}\cdot\sup_{0\leq t\leq T}\n z\n_{H^1_0}^2.
	    \end{equation}
	    \indent Now, define a Banach space $Z$ as
	    \[Z=\left\{u(t)\in L^2(0,T;\hh):\n u\n_Z^2:=\int_{0}^{T}\n\nabla u_t\n_{L^2}^2+\n\Delta u\n_{L^2}^2dt<\infty\right\}.\]
	    Define the mapping
	    \[K:B\rightarrow Z,\ K\xi_u(0)=u(t),\ t\in[0,T],\quad\forall \xi_u(0)\in B.\]
	    Then, arguing as the proof in the \textbf{Step 2} of Theorem \ref{well}, we have for any $\xi_u(0),\xi_v(0)\in B,$
	    \begin{align*}
	    	\n K\xi_u(0)-K\xi_v(0)\n_Z^2&=\int_{0}^{T}\n\nabla w_t\n_{L^2}^2+\n\Delta w\n_{L^2}^2dt\\
	    	&\leq T\cdot e^{C_BT}(\n\nabla w_t(0)\n_{L^2}^2+\n\Delta w(0)\n_{L^2}^2)\\
	    	&\leq C'_{B,T}\n\xi_u(0)-\xi_v(0)\n_{\e_1}^2,
	    \end{align*}
	    where $w(t)=u(t)-v(t), t\in[0,T].$ That is to say that $K$ is globally Lipschitz on $B.$ Define the functional
	    \[n_Z:Z\rightarrow\mathbb{R},\ n_Z(u)=\sqrt{C_{B,T}}\cdot\n z\n_{C(0,T;H^1_0)},\]
	    where the constant $\sqrt{C_{B,T}}$ is the same as the one in (\ref{5-3-7}). By Lemma \ref{Aubin}, the space $Z$ is compactly embedded into $C(0,T;H^1_0)$, which implies that the seminorm $n_Z$ is compact on $Z$.\\
	    \indent It then follows from (\ref{5-3-7}) that
	    \[\n S(T)\xi_1-S(T)\xi_2\n_{\e_1}\leq\eta_T\n\xi_1-\xi_2\n_{\e_1}+n_Z(K\xi_1-K\xi_2),\]
	    where $\eta_T=\sqrt{{c}/(\gamma T+c)}\in(0,1).$
	    It then follows that the system $(\e_1,S(t))$ is quasi-stable on $B$ at time $T$.
	\end{proof}

\section{Global attractor}

    In this section, we are ready to verify the existence of the global attractor of $(\e_1,S(t))$. The finite fractal dimension of the global attractor is also obtained. For convenience of the ready, we recall the difinition of the global attractor and some criterion for its existence.
    \begin{Def}\cite{Hale,Temam,Babin}
    	Let $\{S(t)\}_{t\geq0}$ be a $C^0-$semigroup acting on a metric space $(X,d)$. A subset $\mathcal{A}\subset X$ is called a global attractor of $(X,S(t))$ if\\
    	(i) $\mathcal{A}$ is compact in $X$;\\
    	(ii) $\mathcal{A}$ is invariant, i.e. $S(t)\mathcal{A}=\mathcal{A},\ \forall t\geq0$;\\
    	(iii) $\mathcal{A}$ attracts all bounded sets in $X$, i.e. for any bounded set $B\subset X$, 
    	\[dist_X(S(t)B,\mathcal{A}):=\sup_{x\in B} \inf_{y\in \mathcal{A}}d(x,y)\rightarrow 0,\ as\ t\rightarrow\infty.\]
    \end{Def}
    One of a criterion for the existence of the global attractor is as follows.
    \begin{lemma}\cite{Hale}\label{hale}
    	Let $(X,S(t))$ be a $C^0-$semigroup in a Banach space $X$. Then $(X,S(t))$ possesses a global attractor if and only if\\
    	(i) $(X,S(t))$ is dissipative;\\
    	(ii) $(X,S(t))$ is asymptotically smooth.
    \end{lemma}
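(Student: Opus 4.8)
The statement is the classical Hale criterion, so the plan is to prove the two implications separately; the substance lies in the sufficiency direction, while necessity follows essentially at once from the defining properties of an attractor.

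\textbf{Necessity.} Assuming $(X,S(t))$ possesses a global attractor $\mathcal{A}$, I would first check dissipativity by taking the bounded unit neighborhood $\mathcal{U}=\{x\in X:dist_X(x,\mathcal{A})\leq1\}$, which is bounded because $\mathcal{A}$ is compact; property (iii) of the attractor then shows that any bounded set $B$ satisfies $S(t)B\subset\mathcal{U}$ for all sufficiently large $t$, so $\mathcal{U}$ is a bounded absorbing set. Asymptotic smoothness is then immediate: for any bounded positively invariant set $B$ the compact set $K=\mathcal{A}$ attracts $B$ by (iii).

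\textbf{Sufficiency.} Assuming a bounded absorbing set $\mathcal{B}_0$ together with asymptotic smoothness, I would first replace it by a bounded, positively invariant absorbing set $\mathcal{B}$; the standard device is to take $\mathcal{B}=\overline{\bigcup_{t\geq t_0}S(t)\mathcal{B}_0}$ with $t_0$ large enough that $S(t)\mathcal{B}_0\subset\mathcal{B}_0$ for $t\geq t_0$, and to verify that $\mathcal{B}$ is bounded, absorbing and satisfies $S(t)\mathcal{B}\subset\mathcal{B}$. The candidate attractor is then the $\omega$-limit set
\[\mathcal{A}=\omega(\mathcal{B})=\bigcap_{s\geq0}\overline{\bigcup_{t\geq s}S(t)\mathcal{B}}.\]
The core of the argument is to extract from asymptotic smoothness the nonemptiness, compactness and invariance of $\mathcal{A}$. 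Asymptotic smoothness furnishes a compact set attracting $\mathcal{B}$, from which one deduces the asymptotic compactness property that every sequence $S(t_n)x_n$ with $x_n\in\mathcal{B}$ and $t_n\to\infty$ has a convergent subsequence. This precompactness makes $\omega(\mathcal{B})$ nonempty and compact, and, combined with the continuity of each $S(t)$ and the semigroup law, yields the invariance $S(t)\mathcal{A}=\mathcal{A}$ through the usual sequential characterization of $\omega$-limit points. Finally, since $\mathcal{A}$ attracts $\mathcal{B}$ and $\mathcal{B}$ absorbs every bounded set $B$ (so $S(t)B\subset\mathcal{B}$ for $t\geq t_B$), the semigroup property gives $dist_X(S(t)B,\mathcal{A})\leq dist_X(S(t-t_B)\mathcal{B},\mathcal{A})\to0$, which is attraction of all bounded sets.

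The hard part will be the passage from asymptotic smoothness to the compactness and especially the invariance of $\omega(\mathcal{B})$: this is where the sequential asymptotic-compactness argument and the continuity of the semigroup must be combined carefully, and it is the only step requiring more than routine bookkeeping. The reduction to a positively invariant absorbing set and the propagation of attraction from $\mathcal{B}$ to arbitrary bounded sets are straightforward once the absorbing property is in hand.
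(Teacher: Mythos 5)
The paper does not prove this lemma at all: it is quoted as a known result from Hale's monograph (the citation \cite{Hale} is the ``proof''), so there is no in-paper argument to compare yours against. Your sketch is the standard textbook proof and is essentially sound: the necessity direction, the replacement of $\mathcal{B}_0$ by the positively invariant absorbing set $\overline{\bigcup_{t\geq t_0}S(t)\mathcal{B}_0}$, the identification $\mathcal{A}=\omega(\mathcal{B})$, the derivation of sequential asymptotic compactness from the compact attracting set, and the propagation of attraction from $\mathcal{B}$ to arbitrary bounded sets via the semigroup law are all the right steps, and you correctly flag the invariance of $\omega(\mathcal{B})$ (the inclusion $\omega(\mathcal{B})\subset S(t)\omega(\mathcal{B})$) as the place where asymptotic compactness and continuity must be combined.

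One point in the necessity direction deserves a correction. In both Hale's and Chueshov's formulations, asymptotic smoothness requires, for each closed bounded positively invariant set $B$, a compact attracting set $K$ \emph{contained in} $B$ (or in $\overline{B}$). Taking $K=\mathcal{A}$ does not satisfy this when $\mathcal{A}\not\subset\overline{B}$, which can certainly happen for a small invariant ball far from the attractor is not the issue --- rather $B$ may simply fail to contain $\mathcal{A}$. The fix is routine but should be stated: since $\mathcal{A}$ is compact and attracts $B$, every sequence $S(t_n)x_n$ with $x_n\in B$, $t_n\to\infty$ is precompact, hence $\omega(B)$ is nonempty, compact, attracts $B$, and lies in $\overline{B}$ by positive invariance; so one takes $K=\omega(B)$ rather than $\mathcal{A}$ itself. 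With that adjustment your outline matches the classical argument in Hale.
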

    Now, we are in a position to state our main result.
    \begin{thm}\label{main}
    	Let the assumptions of Theorem \ref{quasi} hold. Then the dynamical system $(\e_1,S(t))$ possesses a global attractor $\mathcal{A}_1$. Moreover, the attractor $\mathcal{A}_1$ has a finite fractal dimension $dim_f^{\e_1}(\mathcal{A}_1)$ which satisfies the estimate (\ref{dim}).
    \end{thm}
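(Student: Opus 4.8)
The plan is to assemble the three structural properties established in the previous sections and feed them into the abstract criteria recalled in Section 2; essentially no new analysis is required. As noted immediately after Theorem \ref{well}, the restriction of $S(t)$ to $\e_1$ is a locally Lipschitz continuous semigroup on $\e_1$, so $(\e_1,S(t))$ is a $C^0$-semigroup and the hypotheses of Lemma \ref{hale} and Propositions \ref{prop1}--\ref{prop2} are all applicable.

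For the existence of the attractor I would verify the two conditions of Hale's criterion (Lemma \ref{hale}) in turn. Dissipativity of $(\e_1,S(t))$ is precisely the content of Theorem \ref{dissipation}, which supplies the bounded absorbing set $\tilde{\mathbb{B}}$. For asymptotic smoothness I would invoke Theorem \ref{quasi}: the system is quasi-stable on every positively invariant bounded set, so Proposition \ref{prop1} applies directly and yields asymptotic smoothness. Lemma \ref{hale} then guarantees a compact global attractor $\mathcal{A}_1\subset\e_1$.

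For the finite-dimensionality claim I would specialize quasi-stability to the attractor itself. Since $\mathcal{A}_1$ is invariant it is in particular positively invariant, and since it is compact it is bounded; hence Theorem \ref{quasi} yields quasi-stability on $\mathcal{A}_1$ at the fixed time $t_*=T$ (with $T>T_0$) produced in its proof, together with the Banach space $Z$, the globally Lipschitz map $K$, and the compact seminorm $n_Z$. With these data in hand Proposition \ref{prop2} applies verbatim and produces both the finiteness of $dim_f^{\e_1}(\mathcal{A}_1)$ and the explicit bound (\ref{dim}).

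The proof carries essentially no analytical difficulty of its own, the genuine work having already been completed in Theorems \ref{dissipation} and \ref{quasi}. The only point requiring care is bookkeeping: one must apply Theorem \ref{quasi} with the choice $B=\mathcal{A}_1$ and track the resulting contraction constant $q=\eta_T=\sqrt{c/(\gamma T+c)}$ and Lipschitz constant $L_K$, so that these are exactly the quantities entering the dimension formula (\ref{dim}). Because they depend only on the bounded set and the fixed time $T$, and not on any prior knowledge of $\mathcal{A}_1$ beyond its boundedness, no circularity arises and the estimate (\ref{dim}) is well-defined.
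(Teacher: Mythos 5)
Your proposal is correct and follows essentially the same route as the paper: dissipativity from Theorem \ref{dissipation} plus asymptotic smoothness via Theorem \ref{quasi} and Proposition \ref{prop1} feed into Lemma \ref{hale} for existence, and then quasi-stability on $\mathcal{A}_1$ combined with Proposition \ref{prop2} gives the dimension bound (\ref{dim}). Your extra remark that $\mathcal{A}_1$ is positively invariant and bounded, so that Theorem \ref{quasi} indeed applies with $B=\mathcal{A}_1$, is a point the paper leaves implicit but is exactly the right justification.
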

    \begin{proof}
    	Due to Theorem \ref{well} and Theorem \ref{dissipation}, the dynamical system $(\e_1,S(t))$ is constinuous and dissipative. From Proposition \ref{prop1} and Theorem \ref{quasi}, $(\e_1,S(t))$ is asymptotically smooth. Then we conclude from Lemma \ref{hale} that $(\e_1,S(t))$ possesses a global attractor $\mathcal{A}_1$.\\
    	\indent Moreover, from Theorem \ref{quasi}, the system is quasi-stable on the attractor $\mathcal{A}_1$ at some time $t_*$. Then by Proposition \ref{prop2}, $\mathcal{A}_1$ has a finite fractal dimension $dim_f^{\e_1}(\mathcal{A}_1)$ and the bound of $dim_f^{\e_1}(\mathcal{A}_1)$ is given in estimate (\ref{dim}).
    \end{proof}
	\begin{remark}
		Since the global attractor $\mathcal{A}_0$ of $(\e,S(t))$ is a bounded set in $\e_1$ (Lemma \ref{lemma3.3}), we can deduce that $\mathcal{A}_0=\mathcal{A}_1.$ Then from Theorem \ref{main}, we can obtain the finite fractal dimension of $\mathcal{A}_0$ in $\e$:
		\[dim_f^\e(\mathcal{A}_0)=dim_f^{\e}(\mathcal{A}_1)\leq dim_f^{\e_1}(\mathcal{A}_1)<\infty.\]
	\end{remark}

\section*{Acknowledgements}
	We would like to express our sincere thanks to the everyone valuable comments and suggestions which play an important role of our original manuscript. This work was supported by the National Science Foundation of China Grant (11731005) .
	
\section*{Data Availability Statement}	
	Data sharing is not applicable to this article as no new data were created or analyzed in this study.

\section*{References}

\end{document}